\documentclass[12pt]{article}
\usepackage{amsmath,amsfonts,amssymb,amsthm}
\usepackage{tcolorbox}
\definecolor{pantone312}{HTML}{009DD1}
\usepackage{mathtools}
\usepackage{bbm}
\usepackage{graphicx}
\usepackage{tabularx}
\usepackage{color}
\usepackage{url}
\usepackage{hyperref}
\usepackage{enumitem}
\usepackage[doi=false,isbn=false,url=false, giveninits=true, hyperref,block=space,sortcites=true]{biblatex} 

\allowdisplaybreaks
  
\definecolor{darkgreen}{rgb}{0,0.55,0}
\def\XXint#1#2#3{{\setbox0=\hbox{$#1{#2#3}{\int}$ }
\vcenter{\hbox{$#2#3$ }}\kern-.59\wd0}}

\newtheorem{proposition}{Proposition}
\newtheorem{theorem}{Theorem}
\newtheorem{lemma}{Lemma}
\newtheorem{remark}{Remark}
\newtheorem{definition}{Definition}

\numberwithin{equation}{section}

\newcommand{\R}{\mathbb{R}}

\newcommand{\eps}{{\varepsilon}}

\newcommand{\di}{\mathop{}\!\mathrm{d}}

\DeclareMathOperator{\curl}{curl}
\renewcommand{\ne}{{\nu,\eps}}

\renewcommand{\div}{\operatorname{div}}

\newcommand{\norm}[1]{\left\lVert#1\right\rVert}

\newcommand{\normh}[1]{\left\lVert#1\right\rVert_{L^2}}
\newcommand{\normi}[1]{\left\lVert#1\right\rVert_{L^\infty}}

\usepackage{sectsty}

\sectionfont{\large}
\subsectionfont{\normalsize}
\subsubsectionfont{\normalsize}
\paragraphfont{\normalsize}

\title{The vanishing viscosity limit for the inhomogeneous incompressible Navier-Stokes equations around a small obstacle}

\begin{document}
\author{Eleni Hübner-Rosenau\footnote{Department of Mathematics, Universität Regensburg, Universitätsstr. 31, 93053 Regensburg, Germany. E-Mail: \texttt{eleni.huebner-rosenau@ur.de}}\hspace{3mm}and Jens Schröder\footnote{Department of Mathematics, Friedrich-Alexander-Universität Erlangen-Nürnberg, Cauerstr. 11, 91058 Erlangen, Germany. E-Mail: \texttt{jens.schroeder@fau.de}}}
\maketitle
\begin{abstract}
	    We investigate the interaction between a vanishing obstacle and the vanishing viscosity limit in the setting of the inhomogeneous incompressible Navier-Stokes equations with no-slip boundary conditions. We work in the space exterior to a small obstacle in three dimensions. We prove quantitative convergence rates, assuming that the obstacle is sufficiently small, namely smaller than a constant times the viscosity. The convergence holds as long as the solution to the limit problem is known to exist and stays sufficiently smooth. This extends the results of Iftimie, Lopes Filho and Nussenzveig Lopes \cite{MR2480743} to the inhomogeneous Navier-Stokes equations. Moreover, we can show improved convergence rates and convergence of the gradient in $\R^3$.
	\end{abstract}
    \section{Introduction}
    Studying the motion of a viscous incompressible fluid past an obstacle is a classical problem in fluid mechanics. Various models for the fluid are being used, such as the Navier-Stokes, Euler or Stokes equations, holding in different physical contexts. The question when each of these models is a good description of the underlying system is both interesting mathematically and physically. In particular, viscous fluids at low viscosity are poorly understood due to turbulence and chaos phenomena. \\
    Closely connected is the \textit{vanishing viscosity limit}: for a viscous fluid, typically described by the Navier-Stokes equations, how well is it described when the viscosity is extremely small, by the Euler equations, where  no viscous terms are considered? While convergence of solutions to the Navier-Stokes equations to the Euler equations in the entire space is well known on small time intervals \cite{swann1971convergence}, in general bounded domains, the formation of boundary layers makes this question much more delicate and has hence attracted much research (see for example \cite{kato1984remarks,grardvaret2024optimal} and the survey \cite{survey_mazzucato}). \\
    In this work, we consider the vanishing viscosity limit in the whole space around a small obstacle: the obstacle creates a localized disturbance of the velocity field, while the viscosity determines how this disturbance is transmitted into the surrounding fluid. Since the obstacle is very small, one hopes that the effect by these boundary layers near the obstacle are negligible. In \cite{MR2480743}, this question has already been considered for the \textit{homogeneous} Navier-Stokes equations: indeed, convergence of Navier-Stokes to Euler holds as long as the size of the obstacle is less than a constant times the viscosity.\\
    Questions of this type occur naturally in homogenization, effective-medium theories, and the analysis of dilute suspensions, as these settings deal with the mathematical description of flows containing small obstacles, such as particles or droplets (see for example \cite{Richard.23,basari2026qualitative} and references therein). Even when an individual obstacle is small compared with the characteristic length scale of the flow, its effect may accumulate or interact with singular limits. 
    \\
    In many physical situations, the fluid is not homogeneous, but may have different density in space (such as in the atmosphere, where the density becomes much lower with altitude, or more generally various kinds of suspensions). In this case, it is more suitable to work with the \textit{inhomogeneous Navier-Stokes equations}. Another example are the multiphase flows, here we can model the different phases as one inhomogeneous fluid. In the case of an incompressible fluid, the density is simply transported by the fluid. Thus, the small-scale modification of the velocity near the obstacle can in principle be transported over macroscopic distances through the density equation.\\
    In this work, we extend the results of \cite{MR2480743} to the inhomogeneous Navier-Stokes equations. Moreover, along the way, we can show some improved convergence rates, leading to convergence of not only the fluid velocity, but also its gradient.
    \subsection{Setting and notation}
Let $\Omega\subset\R^3$ be a $C^2$ bounded connected open set with $0\in\Omega$ and let $\eps\Omega$ be the scaling of the set $\Omega$ by the scalar $\eps>0$. 
The set $\eps\Omega$ is modelling our solid obstacle domain.
We denote $\Omega_\eps=\R^3\setminus \overline{\eps\Omega}.$ This set is modelling the fluid domain.
Let us consider the inhomogeneous incompressible Navier-Stokes equation on the domain $\Omega_\eps$ with viscosity $\nu>0$. We impose initial conditions on the fluid velocity field and the density function:
\begin{align*}
    u_{0,\eps}\in L^2_\sigma(\Omega_\eps),\quad \rho_{0,\eps}\in L^\infty(\Omega_\eps),
\end{align*}
where we denote
    \begin{align*}
        L^2_{\sigma}(\Omega_\eps)=\{v\in L^2(\Omega_\eps;\R^3)\:|\:\mathrm{div}(v)=0, v\cdot n=0\text{ on }\partial\Omega_\eps\}
    \end{align*}
For simplicity the initial conditions are not dependent on $\nu$ here. The boundary condition at the obstacle is the Dirichlet boundary condition.
\begin{equation}
\label{eq_ns}
    \begin{aligned}
        \partial_t(\rho_{\nu,\eps} u_{\nu,\eps})+\div(\rho_{\nu,\eps} u_{\nu,\eps}\otimes u_{\nu,\eps})+\nabla p_{\nu,\eps}-\nu\Delta u_{\nu,\eps}&=0\quad\text{in }(0,\infty)\times\Omega_\eps,\\
        \partial_t\rho_{\nu,\eps}+\div(\rho_{\nu,\eps} u_{\nu,\eps})&=0\quad\text{in }(0,\infty)\times\Omega_\eps,\\
        \div(u_{\nu,\eps})&=0\quad\text{in }(0,\infty)\times\Omega_\eps,\\
        u_{\nu,\eps} |_{\partial\Omega_\eps}&=0\quad\text{in }(0,\infty),\\
        u_{\nu,\eps}(0,\cdot)&=u_{0,\eps}\quad\text{in }\Omega_\eps,\\
        \rho_{\nu,\eps}(0,\cdot)&=\rho_{0,\eps}\quad\text{in }\Omega_\eps.
    \end{aligned}
\end{equation}
In the following we will consider $u_\ne,\rho_\ne,u_{0,\eps},\rho_{0,\eps}$ as functions defined on $\R^3$ by extending them by zero outside $\Omega_\eps$. It is known (see \cite{fernandezcara1992existence,silva2026analysis}) that for the initial data mentioned above, a weak Leray-type solution to \eqref{eq_ns} exists on exterior domains, that is, a weak solution
\begin{align*}
    (u_\ne,\rho_\ne)\in \left[L^2((0,T);H^1_0(\Omega_\eps))\cap L^\infty((0,T),L^2(\Omega_\eps))\right]\times L^\infty((0,T)\times \Omega_\eps),
\end{align*}
which satisfies the energy inequality
\begin{align}\label{eq:nergy ineq u ne}
    \frac{1}{2}\int_{\Omega_\eps}\rho_\ne(s)|u_\ne(s)|^2\di x+\frac{1}{2}\int_0^s\int_{\Omega_\eps}|\nabla u_\ne(t)|^2\di x\di t\leq \frac{1}{2}\int_{\Omega_\eps}\rho_{0,\eps}|u_{0,\eps}|^2\di x.
\end{align}
Similarly we consider the inhomogeneous incompressible Euler equation in the domain $\R^3$ with initial conditions on the fluid velocity field and the density function.
\begin{equation}
\label{eq_ee}
    \begin{aligned}
        \partial_t(\rho u)+\div(\rho u\otimes u)+\nabla p&=0\quad\text{in }(0,\infty)\times\R^3,\\
        \partial_t\rho+\div(\rho u)&=0\quad\text{in }(0,\infty)\times\R^3,\\
        \div(u)&=0\quad\text{in }(0,\infty)\times\R^3,\\
        u(0,\cdot)&=u_0\quad\text{in }\R^3,\\
        \rho(0,\cdot)&=\rho_0\quad\text{in }\R^3.
    \end{aligned}
\end{equation}
It follows from \cite{zhou_euler,itoheuler2} that for smooth initial data there exist local in time smooth solutions to \eqref{eq_ee}. \\
In the following we will denote by $C$ generic constants that do not depend on $\nu$ or $\eps$, which may change from line to line. Moreover, when considering norms, we will often write spaces of the form $L^p((0,T),L^s(\R^3))$ (or $L^p(\R^3)$) as simply $L^p(L^s)$ (or $L^p$).
\subsection{Statement of the main results}
\begin{theorem}\label{thm: main result}
Let $u_{0,\eps}\in L^2_\sigma(\Omega_\eps)$ and $u_0\in H^3(\R^3)$ and divergence free be the initial condition to the Navier-Stokes and Euler equation respectively.
Suppose that $\rho_{0,\eps}\in L^\infty(\Omega_\eps)$ and $\rho_0\in L^\infty(\R^3)$ satisfy the bounds
$$0<c_-<\rho_{0,\eps}(x),\rho_0(x)<c_+<\infty$$ for almost every $x\in\Omega_\eps$ and assume that $\nabla \rho_{0}\in H^2(\R^3)$.\\
     Let $T>0$ be a fixed time (depending on the initial conditions) such that there exists a classical solution $(u,p,\rho)$ to the Euler equation (\ref{eq_ee}) on $\R^3$ with
    \begin{align*}
        &(u,\rho,\nabla p) \in L^\infty([0,T];H^3(\R^3))\times L^\infty([0,T];H^3(\R^3))\times L^\infty([0,T];H^3(\R^3)),\\
        &\partial_t u\in L^\infty([0,T];H^2(\R^3)).
    \end{align*}
    Moreover, let $(u_\ne, \rho_\ne)\in L^2((0,T);H_0^1(\Omega_\eps))$ be a weak Leray-type solution of the Navier-Stokes equation (\ref{eq_ns}) on the domain $\Omega_\eps$ and denote its extension by zero inside $\eps\Omega$ again by $(u_\ne, \rho_\ne)$.\\
    Then, there exists a constant $C>0$ that only depends on $\norm{u}_{L^\infty(H^3)},\norm{\partial_t u}_{L^\infty(L^\infty)},\norm{\nabla p}_{L^2(L^\infty)},$ $c_+, c_-$, $T$ and $\Omega$ such that for $\eps<C\nu$
    \begin{align*}
        &\norm{u_{\ne}-u}_{L^\infty(L^2)}+\norm{\rho_{\ne}-\rho}_{L^\infty(L^2)}+C\nu^{1/2}\norm{\nabla(u_\ne-u)}_{L^2(L^2)}\\
        &\leq C\left(\nu+\norm{u_{0,\eps}-u_0}_{L^2}+\norm{\rho_{0,\eps}-\rho_0}_{L^2}\right).
    \end{align*}
    In particular, in that case, if
    $$\normh{u_{0,\eps}-u_0}+\normh{\rho_{0,\eps}-\rho_0}\to 0\quad\text{as}\quad \eps\to 0,$$
    we also have 
     $$\normh{u_{\ne}-u}\to 0\quad\text{as}\quad \nu\to 0$$
     and 
      $$\normh{\rho_{\ne}-\rho}\to 0\quad\text{as}\quad \nu\to 0$$
      uniformly on $[0,T]$.
\end{theorem}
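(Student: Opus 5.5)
We follow the strategy of \cite{MR2480743}: a relative energy estimate in which the Euler solution $u$ is replaced by a modified field $u^\eps$ that vanishes on $\partial\Omega_\eps$, so that it is an admissible test function for the Navier--Stokes equation. Let $\varphi$ be a smooth cutoff with $\varphi=1$ outside $B(0,2R)$ and $\varphi=0$ near $\overline{\Omega}$, where $\Omega\subset B(0,R)$, and put $\varphi_\eps(x)=\varphi(x/\eps)$. Define
$$u^\eps=\curl\bigl(\varphi_\eps\psi\bigr),$$
where $\psi$ is a vector potential of $u$ with $\curl\psi=u$. Since $u$ is smooth, we may normalize $\psi$ so that $\psi(t,0)=0$; then $|\psi|\le C|x|\,\|u\|_{L^\infty}$ near the origin. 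The field $u^\eps$ is divergence free, vanishes near $\eps\Omega$, and equals $u$ outside $B(0,2R\eps)$. In $\R^3$ the scaling gives
$$\|u^\eps-u\|_{L^2}\le C\eps^{3/2},\qquad \|\nabla(u^\eps-u)\|_{L^2}\le C\eps^{1/2},\qquad \|u^\eps\|_{L^\infty}+\eps\|\nabla u^\eps\|_{L^\infty}\le C.$$
The same bounds hold for $\partial_t u^\eps$, using $\partial_t u\in L^\infty(H^2)$.

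Next we derive the relative energy inequality for $w=u_\ne-u^\eps$ and $\sigma=\rho_\ne-\rho$. The energy inequality \eqref{eq:nergy ineq u ne} controls $\int\rho_\ne|u_\ne|^2$. We test the momentum equation of \eqref{eq_ns} with $u^\eps$, which is legitimate in weak form since $u^\eps$ is smooth and compactly supported away from the obstacle. We also test the mass equation with $|u^\eps|^2/2$. Combining these identities gives
$$\frac12\int\rho_\ne|w|^2(t)+\nu\int_0^t\!\!\int|\nabla u_\ne|^2-\nu\int_0^t\!\!\int\nabla u_\ne:\nabla u^\eps\le\frac12\int\rho_{0,\eps}|w(0)|^2-\int_0^t\!\!\int\rho_\ne\bigl(\partial_tu^\eps+u_\ne\cdot\nabla u^\eps\bigr)\cdot w .$$
The last integrand is rewritten using the Euler equation $\rho(\partial_tu+u\cdot\nabla u)=-\nabla p$, written for $u^\eps$ with error terms supported in $B(0,2R\eps)$. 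This turns it into $-\rho_\ne\, w\cdot\nabla u^\eps\cdot w$, a term $\frac{\sigma}{\rho}\nabla p\cdot w$ (the pressure gradient tested against the divergence-free $w$ only sees the density mismatch), and localized errors. The terms are then estimated as follows.
\begin{itemize}
\item Away from the obstacle, $\nabla u^\eps=\nabla u$ is bounded. Near it, $|\nabla u^\eps|\lesssim 1/\eps$, but Hardy's inequality $\|w/|x|\|_{L^2}\le C\|\nabla w\|_{L^2}$ in $\R^3$ and $|x|\lesssim\eps$ on the support give $\int_{B_{2R\eps}}|w|^2|\nabla u^\eps|\le C\eps\|\nabla w\|_{L^2}^2$. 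This is absorbed into the dissipation $\frac\nu2\|\nabla w\|^2$ exactly when $\eps<C\nu$; this is the smallness condition of the theorem.
\item The viscous cross term contributes $\nu\|\nabla u^\eps\|_{L^2}^2\le C\nu$, which gives the rate $\nu$ after taking square roots.
\item The localized errors are $O(\eps^{3/2})$ in $L^2$ and are handled by Cauchy--Schwarz.
\item The pressure term is bounded by $\|\nabla p\|_{L^\infty}\|\sigma\|_{L^2}\|w\|_{L^2}$.
\end{itemize}

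For the density, we set $\sigma$ against the transport equations: $\partial_t\sigma+u_\ne\cdot\nabla\sigma=-(u_\ne-u)\cdot\nabla\rho$. Renormalization holds by DiPerna--Lions, since $u_\ne\in L^2(H^1)$ and $\sigma\in L^\infty$. Multiplying by $\sigma$ and using that $u_\ne$ is divergence free gives
$$\frac{d}{dt}\|\sigma\|_{L^2}^2\le 2\|\nabla\rho\|_{L^\infty}\|\sigma\|_{L^2}\bigl(\|w\|_{L^2}+C\eps^{3/2}\bigr).$$
This uses $\nabla\rho\in L^\infty$, which follows from the propagated regularity $\rho\in H^3$. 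The bounds $c_-\le\rho_\ne\le c_+$ are propagated by transport, so $\int\rho_\ne|w|^2$ is equivalent to $\|w\|_{L^2}^2$. Adding the two inequalities, Gronwall yields
$$\|w\|_{L^\infty(L^2)}^2+\|\sigma\|_{L^\infty(L^2)}^2+\nu\|\nabla w\|_{L^2(L^2)}^2\le C\bigl(\nu+\|w(0)\|^2+\|\sigma(0)\|^2\bigr)\cdot(\text{factor}).$$
Taking square roots gives an $O(\nu^{1/2})$ rate at first. We then return to $u$ using $\|u^\eps-u\|\lesssim\eps^{3/2}\le C\nu^{3/2}$ and $\nu^{1/2}\|\nabla(u^\eps-u)\|_{L^2}\lesssim\nu^{1/2}\eps^{1/2}\lesssim\nu$.

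\textbf{Main obstacle.} The theorem claims rate $\nu$ rather than $\nu^{1/2}$, so the viscous term needs a sharper treatment. Write $\nu\int\nabla u_\ne:\nabla u^\eps=\nu\int\nabla w:\nabla u^\eps+\nu\|\nabla u^\eps\|^2$. The bound $\nu\|\nabla u^\eps\|_{L^2}^2\le C\nu$ is not good enough, since it gives only rate $\nu^{1/2}$. Instead, we integrate by parts: $\nu\int\nabla w:\nabla u^\eps=-\nu\int w\cdot\Delta u^\eps$. Away from the obstacle $\|\Delta u\|_{L^2}\le C$ by $u\in H^3$, so this piece is bounded by $C\nu\|w\|$, which is harmless in Gronwall and contributes $\nu^2$ to the squared quantity. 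The piece near the obstacle is bounded by $\nu\|\nabla w\|\,\|\nabla u^\eps\|_{L^2(B_{2R\eps})}\le\nu\eps^{1/2}\|\nabla w\|$, and absorbing it leaves $\nu\eps\le C\nu^2$. Hence the squared quantities are $O(\nu^2)$ and the rate is $\nu$. We expect this Hardy-plus-localization bookkeeping in the cutoff region, which uses $\eps\lesssim\nu$ essentially twice, to be the delicate step.
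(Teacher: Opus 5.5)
Your strategy is the paper's. You use the same corrector $u^\eps=\curl(\varphi_\eps\psi)$ with $\psi(t,0)=0$, the same relative energy and density estimate, and the same integration by parts of the far-field viscous term to reach rate $\nu$. Replacing the small-ball Poincar\'e inequality (Lemma~\ref{lemma_poin}) by Hardy's inequality is a legitimate and slightly simpler substitute in $\R^3$.

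There is, however, one false estimate, and it sits exactly where rate $\nu^{1/2}$ separates from rate $\nu$. Writing the Euler equation for $u^\eps$ produces a localized error $\partial_t u^\eps+(u^\eps\cdot\nabla)u^\eps+\rho^{-1}\nabla p$. This error contains the convective piece $(u^\eps\cdot\nabla)(u^\eps-u)$, which is the paper's $I_4$. In the transition region $|u^\eps|\sim 1$ and $|\nabla u^\eps|\sim 1/\eps$. So this piece has size $1/\eps$ on a set of measure $\sim\eps^3$, i.e.\ it is only $O(\eps^{1/2})$ in $L^2$, not $O(\eps^{3/2})$. This is not a worst-case artifact: if $u\equiv U$ near $0$, then $u^\eps(x)=V(x/\eps)$ for a fixed profile $V$, and $(u^\eps\cdot\nabla)u^\eps=\eps^{-1}(V\cdot\nabla V)(x/\eps)$ with $V\cdot\nabla V\not\equiv 0$.

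If you handle this piece by plain Cauchy--Schwarz against $\|w\|_{L^2}$, you get $\int_0^t C\eps^{1/2}\|w\|_{L^2}\le C\int_0^t\|w\|_{L^2}^2+C\eps$. An $O(\eps)=O(\nu)$ term then remains in the squared quantities, and you only obtain $\|u_\ne-u\|_{L^\infty(L^2)}\lesssim\nu^{1/2}$. That is the rate of \cite{MR2480743}, which the theorem is meant to improve. The remaining localized errors, $\nabla\varphi_\eps\times\partial_t\psi$, $(1-\varphi_\eps)\rho^{-1}\nabla p$ and $((u^\eps-\varphi_\eps u)\cdot\nabla)u$, are indeed $O(\eps^{3/2})$, because $\psi,\partial_t\psi=O(\eps)$ there.

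The repair uses your own tool once more. The term is supported in $B_{2R\eps}$, so applying Hardy to $w$ gives
\[
\Bigl|\int\rho_\ne\,(u^\eps\cdot\nabla)(u^\eps-u)\cdot w\Bigr|\le C\|u^\eps\|_{L^\infty}\|\nabla(u^\eps-u)\|_{L^2}\|w\|_{L^2(B_{2R\eps})}\le C\eps^{3/2}\|\nabla w\|_{L^2}\le C\eps\|\nabla w\|_{L^2}^2+C\eps^2 .
\]
The first term is absorbed into the dissipation when $\eps\le c\nu$, and $\eps^2\lesssim\nu^2$. So the condition $\eps\lesssim\nu$ is used three times, not twice. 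With this change your bookkeeping closes at $O(\nu^2)$ for the squared quantities, exactly as in the paper.

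A smaller point concerns the viscous term. Split it with the smooth cutoff: write $\nabla u^\eps$ as $\varphi_\eps\nabla u$ plus terms supported in the transition region, and integrate by parts only $\varphi_\eps\nabla u$. This costs the harmless term $\nu\,((\nabla\varphi_\eps\cdot\nabla)u)\cdot w$, with $\|\nabla\varphi_\eps\|_{L^2}\lesssim\eps^{1/2}$. If you instead split the domain sharply, you must estimate the resulting boundary term on $\partial B_{2R\eps}$, which your proposal does not account for.
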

\begin{remark}\label{rem:conv rate}
    In \cite{MR2480743}, a convergence rate of $\sqrt{\nu}$ is proven in $2D$ and $3D$. However, as we will prove below, this can be improved to $\nu$ in $3D$ by estimating a particular integral a bit more carefully. \\
    In particular, this improved convergence rate shows that the gradient converges also in $L^2((0,T);L^2(\R^3))$ with rate $\nu^{1/2}$.
\end{remark}
We note that one can construct for a given $u_0$ suitable initial conditions for \eqref{eq_ns} that have a rate of convergence better than $O(\eps)$ (for $\rho$, we may simply choose $\rho_{0,\eps}=\rho_0$):
\begin{proposition}[{\cite[Proposition 6]{iftimie2009remarks}}]
Let $u_0\in H^2(\R^3)$ be regular initial conditions. Then there exists a sequence $u_{0,\eps}\in L^2_\sigma(\Omega_\eps)$ such that
    \begin{align*}
        \norm{u_{0,\eps}-u_0}_{L^2}\leq C\eps^{\frac{3}{2}}.
    \end{align*}
\end{proposition}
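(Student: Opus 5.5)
We have to prove the cited Proposition: for $u_0\in H^2(\R^3)$ divergence free, there exist $u_{0,\eps}\in L^2_\sigma(\Omega_\eps)$ with $\norm{u_{0,\eps}-u_0}_{L^2}\leq C\eps^{3/2}$. The plan is to cut off $u_0$ near the obstacle in a divergence-free way, namely through a stream-function (vector potential) representation, so that the whole error comes from a region of volume $O(\eps^3)$ on which $u_0$ is bounded.

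\textbf{Step 1: vector potential.} Since $\div u_0=0$, set $\psi=\curl(-\Delta)^{-1}u_0$, so that $\curl\psi=u_0$ and $\div\psi=0$. Because $u_0\in H^2(\R^3)\subset L^\infty$, Sobolev embedding and Calderón--Zygmund estimates give $\nabla\psi\in H^2\subset L^\infty$. For $\psi$ itself only $\psi\in L^6$ is available globally, since it is defined only up to constants. We therefore renormalize locally: replace $\psi$ by $\tilde\psi=\psi-\psi(0)$. This is legitimate because $\psi$ is continuous, which follows from $\nabla\psi\in L^6\cap L^\infty$. Then $\curl\tilde\psi=u_0$ and, by the mean value theorem, $|\tilde\psi(x)|\le \norm{\nabla\psi}_{L^\infty}|x|$.

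\textbf{Step 2: divergence-free cutoff.} Choose $R$ with $\overline\Omega\subset B_R$. Take $\chi\in C_c^\infty(B_{2R})$ with $\chi=1$ on $B_R$, and set $\chi_\eps(x)=\chi(x/\eps)$. Define
\begin{align*}
u_{0,\eps}=\curl\big((1-\chi_\eps)\tilde\psi\big)=(1-\chi_\eps)u_0-\nabla\chi_\eps\times\tilde\psi .
\end{align*}
This field is divergence free. It vanishes on $B_{\eps R}\supset\eps\overline\Omega$, so in particular $u_{0,\eps}\cdot n=0$ on $\partial\Omega_\eps$. It lies in $L^2$, since the second term is compactly supported and bounded. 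Hence $u_{0,\eps}\in L^2_\sigma(\Omega_\eps)$, and in fact $u_{0,\eps}$ even vanishes near the boundary.

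\textbf{Step 3: error estimate.} We have $u_{0,\eps}-u_0=-\chi_\eps u_0-\nabla\chi_\eps\times\tilde\psi$, supported in $B_{2R\eps}$. The two terms are estimated separately.
\begin{align*}
\norm{\chi_\eps u_0}_{L^2}\le \norm{u_0}_{L^\infty}|B_{2R\eps}|^{1/2}\le C\eps^{3/2}.
\end{align*}
For the second term, $|\nabla\chi_\eps|\le C/\eps$ and $|\tilde\psi|\le C\eps$ on $B_{2R\eps}$, so the product is bounded by a constant there. Hence
\begin{align*}
\norm{\nabla\chi_\eps\times\tilde\psi}_{L^2}\le C\eps^{3/2}.
\end{align*}
The constant $C$ depends only on $\norm{u_0}_{H^2}$ and $\Omega$ (through $R$).

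\textbf{Main subtlety.} The delicate point is the second term: without the subtraction of $\psi(0)$ it would only be $O(\eps^{1/2})$. The renormalization requires $\nabla\psi\in L^\infty$, and this is exactly where the hypothesis $u_0\in H^2$ enters, through $\nabla\psi\in H^2\hookrightarrow L^\infty$ in three dimensions.
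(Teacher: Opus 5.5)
Your proof is correct, and it is essentially the construction the paper uses for its test field $u^\eps=\curl(\eta^\eps\psi)$ in Lemma~\ref{lemma_est}: a Biot--Savart vector potential normalized by $\psi(0)=0$ and cut off at scale $\eps$, so that both error terms are bounded and supported on a set of measure $O(\eps^3)$, exactly as in item~\ref{eq:u eps - u L2} (your $1-\chi_\eps$ plays the role of $\eta^\eps$). The paper does not reprove this statement and only cites \cite[Proposition 6]{iftimie2009remarks}; your argument, which is that same construction applied to $u_0$ at time zero, supplies a proof.
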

\subsection{Previous results and possible generalizations}
As mentioned previously, this work is inspired by \cite{MR2480743} and extends the methods employed there for exterior domains. The main change lies in the more general equation and improving the convergence rate in $3D$, leading to $H^1$-convergence.\\
The vanishing viscosity limit is an extensively studied topic on its own: in bounded domains, Kato gave in \cite{kato1984remarks} an equivalent criterion for the vanishing viscosity limit towards a smooth solution of the Euler equation (starting from no-slip boundary conditions). Since then, there have been many extensions and analytical and numerical investigations, see for example \cite{feireisl2017singular,weinan2000boundary,sammartino1998zero,schlichting2017boundarylayer,masmoudi2006remarks,bhandari2026boundarylayercriterionlow} and references therein. The vanishing viscosity limit for the inhomogeneous Navier-Stokes equations has been considered by the second author and Wiedemann in bounded domains in \cite{schrder2026vanishing}, where they derive a Kato-type criterion. \\
Weak solutions to the inhomogeneous incompressible Navier-Stokes equation in the Leray-Hopf sense were first constructed by Ka\v{z}ihov \cite{kazihov_navier} on a bounded domain in $\R^3$ with Dirichlet boundary condition. Later in the full space the existence of weak solutions to the Navier-Stokes equation was shown by Simon \cite{simon_navier} and Desjardins \cite{desjardins_navier}. Here the existence of vacuum is also permitted, i.e. $\rho_\nu$ does not need to have a lower bound. A self-contained survey about results for weak solutions to the above equation on bounded domains, periodic boundary conditions and the full space can be found in \cite{Lions}. For other unbounded domains, existence of weak solutions was shown by  Fernandez-Cara and Guillen \cite{fernandezcara1992existence}. As for the homogeneous Navier-Stokes equation the uniqueness of weak solutions in not known. For further references and more detailed physical and mathematical background, we refer to the recent book \cite{silva2026analysis}.

Local in time existence of smooth solutions to the inhomogeneous incompressible Euler equation were first established on smooth bounded domains in $\R^3$ with the no-penetration condition $u\cdot n=0$ on $\partial\Omega$ by Beir\~{a}o da Veiga and Valli  \cite{veiga2},\cite{veiga1} and \cite{veiga3}. In particular they showed the local-in-time existence of a smooth solution with smooth initial data. In $\R^3$ the local-in-time existence and uniqueness of the equation was investigated by Danchin \cite{danchin_euler} and  Zhou \cite{zhou_euler} and Itoh \cite{itoheuler,itoheuler2}. Most notably it follows from \cite{zhou_euler} that for initial conditions in $H^s(\R^3)$, $s>\frac{5}{2}$ we have the existence and uniqueness of a solution on a time interval that only depends on the initial condition. A blow-up criterion was also established. Also, in \cite{Itoh_1994,itotani99}, the vanishing viscosity limit for the inhomogeneous-Navier-Stokes equations has been considered in the whole space $\R^3$, showing convergence (in the $L^2$ and $L^p$ frameworks respectively) to the Euler equations for regular initial data.
\vspace{0.5cm}

    In this work, we have used a regular solution to the limit equation, but presumably one can work with less regular functions and solution concepts, such as measure-valued solutions \cite{eiter2026equivalencegeneralizedsolutionconcepts,wiedemann2017weakstronguniquenessfluiddynamics}. Moreover, we have assumed the initial density to be bounded away from vacuum. This could likely be extended to allow vacuum (like the later results \cite{simon_navier,desjardins_navier} also allow vacuum, unlike \cite{kazihov_navier}). Furthermore, from a modelling perspective, it is interesting to study systems with fluid-particle interaction such as in the recent preprint \cite{you2026smallrigidbodyimmersed}, see also \cite{feireisl_2,feireisl_3, feireisl_1}.\\
    Additionally, one can wonder about convergence rates: we have shown that the convergence of the gradient is of order $\nu^{1/2}$, but perhaps one can get better convergence rates away from the obstacle (like in \cite[Theorem 4.1]{swann1971convergence}).\\
    Naturally, one wonders what one can say if we do not assume that $\eps\leq C\nu$ (which physically means that the local Reynolds number is small). In this case, one could try to derive a Kato-type criterion, similar to \cite{schrder2026vanishing}. To this end, it would be natural to study the Euler equations around a small obstacle, which has been done in $\R^2$ in \cite{iftimie20032deuler}. However, the Euler equations are notoriously harder to study in $\R^3$, so this appears to be more a more delicate question. \\
    Finally, as indicated previously, a natural problem is to also study the effect of many, not just one, small particles on the fluid in a homogenization setting. For the homogeneous Navier-Stokes equations, this has been considered in \cite{lacave2015, Richard.23}. All of the previously mentioned other questions apply in similar ways also to this homogenization viewpoint.

    
\section{Preliminary definitions and results}
Our proof, like \cite{MR2480743}, will use the relative energy method, which is also used to prove the vanishing viscosity limit in the full space. However, unlike in this "classical" case, we face the problem that the limit function $u$ is generally supported in the whole space $\R^3$ and hence is not an admissible test function for the weak formulation of \eqref{eq_ns}. Hence, one constructs a function $u^\eps$ such that $\normh{u^\eps-u}$ is small. The naive idea is to use a multiplicative cutoff function, but this will generally not result in a divergence-free function. Therefore, one cuts off at the level of the stream function as follows.

First we construct a stream function $\psi$ for the vector field $u$ by the Biot-Savart law:
$$\psi(t,x)=-\int_{\R^3}\frac{x-y}{4\pi|x-y|^3}\times u(y,t)\di y-\int_{\R^3}\frac{y}{4\pi|y|^3}\times u(y,t)\di y.$$
Here we fixed the integration constant by $\psi(t,0)=0$, this will be helpful later on for the computations. In particular note that $\psi$ satisfies
\begin{equation*}
    \curl\psi=u.
\end{equation*}
We briefly prove that for regular enough $u$, the stream function is well-defined and bounded.
\begin{lemma}\label{lemma: biot savart bounded}
    The Biot-Savart operator is a bounded operator from $L^p(\R^3)\cap L^q(\R^3)$ to $L^\infty(\R^3)$, where $p>3,q<3$.
\end{lemma}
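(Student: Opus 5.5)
The plan is to split the Biot--Savart kernel $K(z)=\frac{z}{4\pi|z|^3}$, which satisfies $|K(z)|\leq \frac{1}{4\pi|z|^2}$, into a near part $K_1=K\mathbbm{1}_{|z|\leq 1}$ and a far part $K_2=K\mathbbm{1}_{|z|>1}$. For fixed $x$ we write
\begin{align*}
\left|\int_{\R^3}K(x-y)\times u(y)\di y\right|\leq \int_{|x-y|\leq 1}\frac{|u(y)|}{4\pi|x-y|^2}\di y+\int_{|x-y|>1}\frac{|u(y)|}{4\pi|x-y|^2}\di y .
\end{align*}
Each piece is then bounded by H\"older's inequality, using the integrability of $u$ that matches the integrability of the kernel piece. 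The additive constant in the definition of $\psi$ is the same integral evaluated at $x=0$, so it obeys the same bound; hence bounding the operator at an arbitrary point $x$, uniformly in $x$, suffices.

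For the near part we use $u\in L^p$ with $p>3$. The conjugate exponent satisfies $p'<3/2$, so $|z|^{-2p'}$ is integrable near the origin in $\R^3$, since $2p'<3$. This gives
\begin{align*}
\int_{|x-y|\leq 1}\frac{|u(y)|}{|x-y|^2}\di y\leq \norm{K_1}_{L^{p'}}\norm{u}_{L^p}\leq C_p\norm{u}_{L^p}.
\end{align*}
For the far part we use $u\in L^q$ with $q<3$. Now $q'>3/2$, so $2q'>3$ and $|z|^{-2q'}$ is integrable at infinity. This gives
\begin{align*}
\int_{|x-y|>1}\frac{|u(y)|}{|x-y|^2}\di y\leq \norm{K_2}_{L^{q'}}\norm{u}_{L^q}\leq C_q\norm{u}_{L^q}.
\end{align*}
If $q=1$, then $q'=\infty$ and $K_2$ is simply bounded by $1/(4\pi)$, so the estimate still holds.

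Combining the two pieces yields $\norm{\psi}_{L^\infty}\leq 2(C_p\norm{u}_{L^p}+C_q\norm{u}_{L^q})$, which is the claimed boundedness from $L^p\cap L^q$ to $L^\infty$.

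There is no real obstacle here. The only point needing care is the exponent bookkeeping: the two local integrability conditions on the kernel are exactly $p>3$ near the origin and $q<3$ at infinity. For the paper's application we would also note that $u\in L^\infty(H^3)\subset L^\infty(L^2\cap L^\infty)$, so we can take for instance $q=2$ and $p=4$, which gives a bound on $\psi$ uniform in $t\in[0,T]$.
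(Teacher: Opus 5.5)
Your proof is correct and is essentially the paper's argument: you split the kernel at $|z|=1$ and apply H\"older's inequality, using $p'<3/2$ for integrability of $|z|^{-2p'}$ near the origin and $q'>3/2$ for integrability of $|z|^{-2q'}$ at infinity. Your extra remarks are also correct and are details the paper leaves implicit: the additive normalizing constant in $\psi$ obeys the same bound, and the endpoint case $q=1$ goes through.
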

\begin{proof}
    We split the integral and perform a change of variables to obtain
    \begin{align*}
        \left|\int_{\R^3}\frac{x-y}{4\pi|x-y|^3}\times u(y,t)\di y\right|&\leq \frac{1}{4\pi}\int_{|z|<1}\frac{|u(x-z)|}{|z|^2}\di y+\frac{1}{4\pi}\int_{|z|\geq 1}\frac{|u(x-z)|}{|z|^2}\di y\\
        &\leq C\norm{u}_{L^{p}(\R^3)}\underbrace{\norm{\frac{1}{|\cdot|^2}}_{L^{p'}(B_1)}}_{<\infty\;\text{for}\; p'<\frac{3}{2}}+C\norm{u}_{L^{q}(\R^3)}\underbrace{\norm{\frac{1}{|\cdot|^2}}_{L^{q'}(\R^3\setminus B_1)}}_{<\infty\;\text{for}\; q'>\frac{3}{2}}.
    \end{align*}
    Hence we need to suppose that $u\in L^p(\R^3)\cap L^q(\R^3)$ for some $p>3$ and $q<3$. In particular this is satisfied for $u\in H^1(\R^3)\subseteq L^6(\R^3)$.
\end{proof}

Next, we construct the cutoff function. Let $R>0$ be large enough so that $\Omega\subset B_R(0).$ Let $\eta:\R_{\geq 0}\to \R_{\geq 0}$ be a smooth function such that $\eta(r)=0$ for all $0\leq r\leq R$ and $\eta(r)=1$ for all $r\geq R+1$ and $0\leq\eta\leq 1$. Using $\eta$ we define for every $\eps>0$ the function $\eta^\eps:\R^3\to \R$ by $\eta^\eps(x)=\eta(|x|/\eps)$.

With this we now construct the test function  
\begin{align}\label{eq:def u eps}
    u^\eps=\curl(\eta^\eps \psi)=\eta^\eps u+\nabla\eta^\eps\times \psi.
\end{align}
Note that $u^\eps$ is divergence free and we have $u^\eps=u$ outside of the ball of radius $(R+1)\eps$; $u^\eps=0$ inside the ball of radius $R\eps$ and a smooth transition between. 

Using the momentum equation for $u$ we get the following equation for the time derivative of $u^\eps:$
\begin{align}\label{eq: del t u eps}
    \partial_t u^\eps=\nabla\eta^\eps\times \partial_t\psi-\frac{\eta^\eps}{\rho}\nabla p-\eta^\eps (u\cdot \nabla)u.
\end{align}
We study below how well this function $u^\eps$ approximates $u$. For the following computations we fix the additive constant of the pressure $p$ such that
\begin{align}
    p(0,t)=0\quad\text{for all } t\in [0,T].
\end{align}
Moreover, we note that the support of $\nabla\eta^\eps$ is contained in an annulus of inner radius $R\eps$ and outer radius $(R+1)\eps$ and hence has measure $|A_\eps|=C\eps^3.$ 

In the following, we will denote by $C$ any constant that has the dependencies as in the statement of Theorem \ref{thm: main result}.
\begin{lemma}
\label{lemma_est}
Let $(u,\rho,p)$ be a solution to $(\ref{eq_ee})$ such that
$$0<c_-<\rho_0(x)<c_+<\infty$$ for almost every $x\in\Omega_\eps$ and 
    \begin{align*}
        &(u,\rho,\nabla p) \in L^\infty([0,T];H^3(\R^3))\times L^\infty([0,T];H^3(\R^3))\times L^\infty([0,T];H^3(\R^3)),\\
        &\partial_t u\in L^\infty([0,T];H^2(\R^3)).
    \end{align*}
Let $\psi$ and $\eta^\eps$ be the functions constructed in the previous paragraph. Define $u^\eps$ by \eqref{eq:def u eps}. We then have the bounds
\begin{enumerate}[label=(\roman*)]
   \item \label{eq:grad u eps L infty}
       $\norm{\nabla u^\eps}_{L^\infty}\leq C/\eps$
   \item \label{eq:grad u eps L2}$\norm{\nabla u^\eps}_{L^2}\leq \norm{u\partial_i \eta^\eps}_{L^2}+\norm{\nabla\eta^\eps\times\partial_i\psi}_{L^2}+\norm{\partial_i\nabla\eta^\eps\times\psi}_{L^2}+\norm{\eta^\eps\partial_i u}_{L^2}\leq C$,
   \item \label{eq:grad u eps L2 first three}$ \norm{u\partial_i \eta^\eps}_{L^2}+\norm{\nabla\eta^\eps\times\partial_i\psi}_{L^2}+\norm{\partial_i\nabla\eta^\eps\times\psi}_{L^2}\leq C\eps^{\frac{1}{2}}$,
    \item \label{eq:u eps L infty}$\norm{u^\eps}_{L^\infty}\leq C$
    \item \label{eq:u eps L2}$\norm{u^\eps}_{L^2}\leq C$
    \item \label{eq:del t u eps L infty}$\norm{\partial_t u^\eps}_{L^\infty}\leq C$
    \item \label{eq:u eps - u L2}$\norm{u^\eps-u}_{L^2}\leq C\eps^\frac{3}{2}$
    \item $\label{eq:u eps - eta u L2}\norm{u^\eps-\eta^\eps u}_{L^2}\leq C\eps^\frac{3}{2}$
    \item \label{eq:grad eta with p or del t psi}$\norm{p\nabla \eta^\eps}_{L^2}+\norm{\partial_t\psi\nabla\eta^\eps}_{L^2}\leq C\eps^\frac{3}{2}$
    \item \label{eq:grad eta grad psi or psi grad2 eta}$\norm{\nabla\psi \nabla \eta^\eps}_{L^\infty}+\norm{\psi \nabla^2 \eta^\eps}_{L^\infty}\leq C/\eps.$
    \item \label{eq:u eps - u H^1}$\norm{\nabla(u^\eps-u)}_{L^2}\leq C\eps^\frac{1}{2}$
\end{enumerate}
\end{lemma}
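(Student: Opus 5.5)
The whole lemma rests on three elementary facts, which I will establish first and then combine term by term.

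\emph{Scaling of the cutoff.} Since $\eta^\eps(x)=\eta(|x|/\eps)$, we have $\|\nabla^k\eta^\eps\|_{L^\infty}\le C\eps^{-k}$ for $k=1,2$. Both derivatives are supported in the annulus $A_\eps$, and $|A_\eps|=C\eps^3$.

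\emph{Uniform bounds on $u$ and its derivatives.} By the Sobolev embedding $H^3\subset W^{1,\infty}$, the quantities $u,\nabla u,\nabla p,\partial_t u$ are bounded in $L^\infty$ uniformly in time, and they lie in $L^2$ as well. Lemma \ref{lemma: biot savart bounded} then gives $\|\psi\|_{L^\infty}\le C$, because $u\in H^1\subset L^6\cap L^2$. The same lemma applied to $\partial_t u\in H^2$ gives $\|\partial_t\psi\|_{L^\infty}\le C$. For $\nabla\psi$ I use that the derivative of the Biot--Savart kernel is a Calder\'on--Zygmund operator. Since $u\in H^3$, we get $\nabla\psi\in L^6$ (for instance via $\nabla\psi\in \dot H^1$ together with Sobolev), and in fact $\nabla\psi\in L^\infty$ by Sobolev embedding of the higher derivatives, as $\nabla\psi\in\dot H^1\cap\dot H^3$.

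\emph{Vanishing at the origin.} The normalizations $\psi(t,0)=0$ and $p(t,0)=0$, combined with the Lipschitz bounds on $\psi$ and $p$ (from $\nabla\psi,\nabla p\in L^\infty$), give $|\psi(t,x)|+|p(t,x)|\le C|x|\le C\eps$ on $A_\eps$. This is the key gain: each factor $\nabla\eta^\eps$ costs $\eps^{-1}$, but multiplying it by $\psi$ or $p$ recovers a factor $\eps$.

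With these facts I verify the items in order.

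\emph{Items (iv) and (x).} For (iv), $|u^\eps|\le|u|+|\nabla\eta^\eps||\psi|\le C+C\eps^{-1}\cdot\eps$. For (x), $|\nabla\psi\,\nabla\eta^\eps|\le C/\eps$ directly, and $|\psi\,\nabla^2\eta^\eps|\le C\eps\cdot\eps^{-2}=C/\eps$.

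\emph{Item (i).} Differentiating $u^\eps=\eta^\eps u+\nabla\eta^\eps\times\psi$ produces the four terms listed in (ii). Each is bounded in $L^\infty$ by $C/\eps$, using (x) for the last two.

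\emph{Item (iii).} Each of the first three terms is $O(1)$ pointwise and supported on $A_\eps$. Indeed, $|u\,\partial_i\eta^\eps|\le C/\eps$ would be too crude, so I handle it as follows. For $u\,\partial_i\eta^\eps$ I use $u\in L^\infty$ together with $\|\nabla\eta^\eps\|_{L^2}=C\eps^{-1}\eps^{3/2}=C\eps^{1/2}$. For $\nabla\eta^\eps\times\partial_i\psi$ I use $\nabla\psi\in L^\infty$, again giving $C\eps^{1/2}$. For $\partial_i\nabla\eta^\eps\times\psi$ I use $|\psi|\le C\eps$, giving $\|\psi\nabla^2\eta^\eps\|_{L^2}\le C\eps\cdot\eps^{-2}\cdot\eps^{3/2}=C\eps^{1/2}$.

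\emph{Items (ii) and (xi).} Item (ii) follows from (iii) and $\|\eta^\eps\nabla u\|_{L^2}\le\|\nabla u\|_{L^2}$. For (xi), write
\[
\nabla(u^\eps-u)=(\eta^\eps-1)\nabla u+(\text{the three terms in (iii)}).
\]
The first term is bounded by $\|\nabla u\|_{L^\infty}|B_{(R+1)\eps}|^{1/2}\le C\eps^{3/2}$.

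\emph{Items (v), (vii), (viii) and (ix).} We have
\[
u^\eps-u=(\eta^\eps-1)u+\nabla\eta^\eps\times\psi, \qquad u^\eps-\eta^\eps u=\nabla\eta^\eps\times\psi .
\]
The term $(\eta^\eps-1)u$ is bounded pointwise by $C$ and supported in a ball of volume $C\eps^3$. The term $\nabla\eta^\eps\times\psi$ is bounded pointwise by $C$ by (iv) and supported in $A_\eps$. Both are therefore $O(\eps^{3/2})$ in $L^2$, which proves (vii) and (viii), and then (v) follows by the triangle inequality. For (ix), the bound $|p\,\nabla\eta^\eps|\le C$ uses $p(0)=0$, and the bound $|\partial_t\psi\,\nabla\eta^\eps|\le C$ uses the vanishing $\partial_t\psi(t,0)=0$, which follows from differentiating the normalization $\psi(t,0)=0$ in time. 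Combined with the Lipschitz bound $|\partial_t\psi(x)|\le \|\nabla\partial_t\psi\|_{L^\infty}|x|$, where $\|\nabla\partial_t\psi\|_{L^\infty}<\infty$ follows from $\partial_t u\in H^2$ as in the discussion of $\nabla\psi$, the support on $A_\eps$ then gives $C\eps^{3/2}$.

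\emph{Item (vi).} Use \eqref{eq: del t u eps} with $\rho\ge c_->0$; this lower bound holds because $\rho$ is transported by $u$. The term $\nabla\eta^\eps\times\partial_t\psi$ is bounded by $C$ as just shown, and the remaining terms $\frac{\eta^\eps}{\rho}\nabla p$ and $\eta^\eps(u\cdot\nabla)u$ are bounded in $L^\infty$.

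\emph{Main obstacle.} The delicate point is justifying the $L^\infty$ bounds on $\nabla\psi$ and $\nabla\partial_t\psi$: the Biot--Savart kernel differentiated is singular of order $|z|^{-3}$, so Lemma \ref{lemma: biot savart bounded} does not apply directly. My first attempt will be to use $\Delta\psi=-\curl u$ (after fixing a divergence-free gauge), so that $\nabla\psi=\nabla\Delta^{-1}\curl u$. This expresses $\nabla\psi$ as a Riesz-transform-type operator applied to $u$, which is bounded in every $H^s$ and in $L^6$. Sobolev embedding of $H^2$-type regularity, with the low-frequency part controlled via $L^6$, then yields the $L^\infty$ bound. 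The same argument with $\partial_t u\in H^2$ handles $\nabla\partial_t\psi$.
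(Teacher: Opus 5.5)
Your proposal is correct and follows essentially the same argument as the paper: the pointwise scaling of $\nabla^k\eta^\eps$, the measure $|A_\eps|=C\eps^3$, and the normalizations $\psi(t,0)=p(t,0)=0$, which give $|\psi|,|p|\le C\eps$ on $A_\eps$. In places it is more careful than the paper, e.g.\ making $\partial_t\psi(t,0)=0$ explicit for (vi) and (ix), and handling the whole ball $B_{(R+1)\eps}$ rather than just $A_\eps$ in (xi).

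Two small remarks. First, in (iii) the three terms are $O(1/\eps)$ pointwise, not $O(1)$; that bound times $|A_\eps|^{1/2}$ is exactly what you compute, so it is not ``too crude''. Second, your main obstacle disappears if you differentiate $u$ rather than the kernel. Then $\nabla\psi$ is the Biot--Savart integral of $\nabla u\in H^2\subset L^2\cap L^\infty$, and $\nabla\partial_t\psi$ that of $\nabla\partial_t u\in H^1\subset L^2\cap L^6$, so Lemma~\ref{lemma: biot savart bounded} applies directly.
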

\begin{remark}
    Note that Lemma \ref{lemma_est} contains the crucial estimates that prevent the appearance of the boundary layer integral in the Kato criterion \cite{kato1984remarks}. Indeed in \cite{kato1984remarks} the boundary layer has measure $|A_\eps|=C\eps$ and hence $\normh{\nabla u_\eps}\leq C/\eps^\frac{1}{2}$, which is not enough to deduce the desired convergence in Theorem \ref{thm: main result}. Moreover it follows from Theorem \ref{thm: main result} that $\normh{\nabla u_\ne}$ is bounded hence the Kato criterion is satisfied.
\end{remark}
\begin{proof}
    First note that $|A_\eps|=C\eps^3.$ By the regularity of the solution of the Euler equation we have that $u,\nabla u, \nabla p, \partial_t u$ are bounded by assumption and also $\psi,\nabla\psi, \partial_t\psi$ are uniformly bounded by Lemma \ref{lemma: biot savart bounded}. Further note that $|\eta^\eps|\leq C,|\nabla\eta^\eps|\leq C/\eps,|\nabla^2\eta^\eps|\leq C/\eps^2.$ Note that we fixed $\psi(0,t)=0$ and $p(0,t)=0$ hence by the fundamental theorem of calculus we have that $|\psi(t,x)|\leq C\eps$ for all $x\in A_\eps$ and similarly $|p(t,x)|\leq C\eps.$
    We have the following equations:
    \begin{equation*}
         u^\eps=\curl(\eta^\eps\psi)=\nabla\eta^\eps\times \psi+\eta^\eps u
    \end{equation*}
    \begin{equation*}
         \rho\partial_t u^\eps=\rho\nabla\eta^\eps\times \partial_t\psi-\frac{\eta^\eps}{\rho}\nabla p-\eta^\eps (u\cdot \nabla)u.
    \end{equation*}
    \begin{equation}\label{eq:nabla u eps computed}
        \partial_i u^\eps=u\partial_i \eta^\eps+\nabla\eta^\eps\times\partial_i\psi+\partial_i\nabla\eta^\eps\times\psi+\eta^\eps\partial_i u
    \end{equation}
    Now we can estimate these equations:
    \begin{enumerate}[label=(\roman*)]
        \item $\norm{\partial_i u^\eps}_{L^\infty}\leq \norm{u\partial_i \eta^\eps}_{L^\infty}+\norm{\nabla\eta^\eps\times\partial_i\psi}_{L^\infty}+\norm{\partial_i\nabla\eta^\eps\times\psi}_{L^\infty}+\norm{\eta^\eps\partial_i u}_{L^\infty}\leq K/\eps.$
        \item Since the first three integrals have support on $A_\eps$,
        \begin{align*}
            &\norm{\partial_i u^\eps}_{L^2}\leq \norm{u\partial_i \eta^\eps}_{L^2}+\norm{\nabla\eta^\eps\times\partial_i\psi}_{L^2}+\norm{\partial_i\nabla\eta^\eps\times\psi}_{L^2}+\norm{\eta^\eps\partial_i u}_{L^2}\\
            &\leq C\eps^\frac{1}{2}+C\leq C.
        \end{align*}
        \item Follows by the above computation.
        \item $$\norm{u^\eps}_{L^\infty}\leq \norm{u\eta^\eps}_{L^\infty}+\norm{\nabla\eta^\eps\times\psi}_{L^\infty}\leq C$$
        \item $$\norm{u^\eps}_{L^2}\leq \norm{u\eta^\eps}_{L^2}+\norm{\nabla\eta^\eps\times\psi}_{L^2}\leq C+C\eps^\frac{3}{2}\leq C$$
        since the second term has support in $A_\eps.$
        \item $\norm{\partial_t u^\eps}_{L^\infty}\leq \norm{\nabla\eta^\eps\times \partial_t \psi}_{L^\infty}+\norm{\frac{\eta^\eps}{\rho}\nabla p}_{L^\infty}+\norm{\eta^\eps(u\cdot\nabla)u}_{L^\infty}\leq C\frac{1}{\eps}\eps+C+C\leq C.$
        \item $\norm{u^\eps-u}_{L^2}=\norm{\eta^\eps u+\nabla\eta^\eps\times\psi-u}_{L^2}\leq \norm{(\eta^\eps-1)u}_{L^2}+\norm{\nabla\eta^\eps\times\psi}_{L^2}\leq C\eps^\frac{3}{2}+C\frac{1}{\eps}\eps\eps^\frac{3}{2}\leq C\eps^\frac{3}{2}$
        \item $\norm{u^\eps-\eta^\eps u}_{L^2}=\norm{\nabla\eta^\eps\times\psi}_{L^2}\leq C\frac{1}{\eps}\eps\eps^\frac{3}{2}\leq C\eps^\frac{3}{2}$
        \item $\norm{p\nabla \eta^\eps}_{L^2}+\norm{\partial_t\psi\nabla\eta^\eps}_{L^2}\leq C\eps\frac{1}{\eps}\eps^\frac{3}{2}+C\eps\frac{1}{\eps}\eps^\frac{3}{2}\leq C\eps^\frac{3}{2}$
        \item $\norm{\nabla\psi \nabla \eta^\eps}_{L^\infty}+\norm{\psi \nabla^2 \eta^\eps}_{L^\infty}\leq  C\frac{1}{\eps}+ C\eps\frac{1}{\eps^2} \leq C/\eps.$
        \item From $\ref{eq:grad u eps L infty}$, we have that $\norm{\nabla (u^\eps-u)}_{L^\infty}\leq K/\eps$. Moreover, $\nabla(u^\eps-u)$ is supported in $A_\eps$, so that
        \begin{align*}
            \norm{\nabla (u^\eps-u)}_{L^2}\leq \eps^\frac{3}{2}\frac{1}{\eps}=\eps^\frac{1}{2}.
        \end{align*}
    \end{enumerate}
\end{proof}
We will use the following Poincaré inequality in a small ball:
\begin{lemma}[Poincaré estimate, {\cite[Lemma 3]{MR2480743}}]
\label{lemma_poin}
    Let $\eps>0$ small enough and let $\Omega, \Omega_\eps$ as in Theorem \ref{thm: main result}. Let $R>0$ big enough such that $\Omega\subset B_R(0)$ and let $f\in H_0^1(\Omega_\eps).$ We denote the extension by zero of $f$ inside $\eps\Omega$ again by $f$. Then there exists a constant $C$ that does not depend on $\eps$ such that 
    \begin{equation*}
        \norm{f}_{L^2(B_{(R+1)\eps}(0))}\leq C\eps \norm{\nabla f}_{L^2(B_{(R+1)\eps}(0))}.
    \end{equation*}
\end{lemma}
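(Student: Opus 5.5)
The plan is a scaling argument: reduce to a fixed reference configuration, where the inequality becomes a standard Poincaré inequality for functions vanishing on a set of positive measure. Set $g(y)=f(\eps y)$ for $y\in B_{R+1}(0)$. Since $f\in H^1_0(\Omega_\eps)$ is extended by zero inside $\eps\Omega$, the rescaled function satisfies $g\in H^1(B_{R+1}(0))$ and $g=0$ a.e.\ on $\Omega$. Here $\Omega\subset B_R(0)$ is a fixed open set of positive measure, independent of $\eps$. The extension by zero of an $H^1_0(\Omega_\eps)$ function lies in $H^1(\R^3)$ with gradient equal to the zero extension of $\nabla f$. This is standard for $H^1_0$ and is the only place the boundary condition enters.

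The core step is to prove, on the fixed ball $B:=B_{R+1}(0)$, that
\begin{equation*}
\norm{g}_{L^2(B)}\leq C_0\norm{\nabla g}_{L^2(B)}\quad\text{for all } g\in H^1(B) \text{ with } g=0 \text{ a.e. on } \Omega.
\end{equation*}
I would argue by contradiction and compactness. Suppose there are $g_n$ with $\norm{g_n}_{L^2(B)}=1$ and $\norm{\nabla g_n}_{L^2(B)}\to 0$. Then $(g_n)$ is bounded in $H^1(B)$. By Rellich--Kondrachov, since $B$ is a Lipschitz bounded domain, a subsequence converges in $L^2(B)$ to some $g$. The limit satisfies $\nabla g=0$, so $g$ is constant on the connected set $B$. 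It also satisfies $\norm{g}_{L^2(B)}=1$ and, by $L^2$ convergence, $g=0$ a.e.\ on $\Omega$. Since $|\Omega|>0$, the constant must be $0$, which contradicts $\norm{g}_{L^2(B)}=1$. Alternatively, one can use the direct estimate $\norm{g-\bar g}_{L^2(B)}\le C\norm{\nabla g}_{L^2(B)}$ and bound $|\bar g|\,|\Omega|^{1/2}=\norm{g-\bar g}_{L^2(\Omega)}$, which gives an explicit constant.

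Finally, scale back. A change of variables $x=\eps y$ gives
\begin{equation*}
\norm{f}_{L^2(B_{(R+1)\eps})}=\eps^{3/2}\norm{g}_{L^2(B)}
\quad\text{and}\quad
\norm{\nabla f}_{L^2(B_{(R+1)\eps})}=\eps^{3/2}\eps^{-1}\norm{\nabla g}_{L^2(B)},
\end{equation*}
since $\nabla g(y)=\eps(\nabla f)(\eps y)$. Combining these with the fixed-ball inequality yields $\norm{f}_{L^2(B_{(R+1)\eps})}\le C_0\,\eps\,\norm{\nabla f}_{L^2(B_{(R+1)\eps})}$, with $C_0$ depending only on $\Omega$ and $R$.

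No step is genuinely hard. The only point needing care is that the constant is independent of $\eps$, and this is guaranteed because after rescaling both the ball and the vanishing set $\Omega$ are fixed. The "$\eps$ small enough" hypothesis is not actually needed beyond making the statement meaningful.
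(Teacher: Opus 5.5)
Your argument is correct. The rescaling $x=\eps y$ gives exactly the factor $\eps^{3/2}\cdot\eps^{-1/2}=\eps$, and the fixed-ball Poincaré inequality for $H^1(B_{R+1}(0))$ functions vanishing a.e.\ on the positive-measure set $\Omega$ (by compactness, or via the mean-value bound) supplies a constant depending only on $\Omega$ and $R$; the paper itself gives no proof and cites \cite[Lemma 3]{MR2480743}, which I believe relies on the same reduction to $\eps=1$ plus a standard Poincaré inequality on a fixed domain.
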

\section{Proof of the main result}
To prove Theorem \ref{thm: main result}, we use the following energy, which in contrast to the usual Leray-Hopf energy incorporates the variable density.
 \begin{definition}\label{relendef}
 We define the relative energy functional
    \begin{equation}\label{relenergydef} 
		E^\ne(t)=E_1^\ne(t)+E_2^\ne(t)=\frac{1}{2}\int_{\Omega_\eps} \rho_\ne(t) |u_\ne(t)-u(t)|^2\di x+\int_{\Omega_\eps}|\rho_\ne(t)-\rho(t)|^2\di x,
\end{equation}
     where $(\rho_\ne,u_\ne) $ is a solution of (\ref{eq_ns}) and $(\rho, u)$ is a solution of (\ref{eq_ee}).
 \end{definition}
 We utilize this energy to prove the following relative energy estimate.
\begin{proposition}\label{prop:energy estimate}
Under the assumptions of Theorem \ref{thm: main result}, we have
        \begin{multline}\label{eq prop}
            E^\ne(t)+\left(\frac{\nu}{2}-C\eps\right) \norm{\nabla (u_\ne-u)}_{L^2(L^2)}^2\\ 
            \leq C\left(E^\ne(0)+\int_0^t E^\ne(s)\di {s}+\eps^2+\nu^2+(\nu-C\eps)\eps\right).
        \end{multline}
    \end{proposition}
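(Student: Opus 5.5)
The plan is the relative energy method, with $u^\eps$ from \eqref{eq:def u eps} as the admissible test function in place of $u$; the error $u-u^\eps$ is then controlled by Lemma \ref{lemma_est}. I would split $E_1^\ne=\frac12\int\rho_\ne|u_\ne|^2-\int\rho_\ne u_\ne\cdot u+\frac12\int\rho_\ne|u|^2$ and treat the three pieces separately. The first piece is bounded by the energy inequality \eqref{eq:nergy ineq u ne}, which also produces $-\nu\int_0^t\normh{\nabla u_\ne}^2$. For the cross term, I would write $\int\rho_\ne u_\ne\cdot u=\int\rho_\ne u_\ne\cdot u^\eps+\int\rho_\ne u_\ne\cdot(u-u^\eps)$. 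The first part is computed by testing the momentum equation of \eqref{eq_ns} with $u^\eps\in H^1_0(\Omega_\eps)$, which is divergence free, using \eqref{eq: del t u eps} for $\partial_t u^\eps$. The correction is $O(\eps^{3/2})$ by (vii) and the uniform $L^2$ bound on $u_\ne$, so it can be absorbed into $\eps^2$ after a Young inequality with $E$. Similarly, $\frac12\int\rho_\ne|u|^2$ is differentiated using the continuity equation for $\rho_\ne$, tested against $|u|^2/2$, which is allowed since $\rho_\ne$ solves a transport equation in all of $\R^3$ once extended by zero with $u_\ne=0$ inside.

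Collecting terms, I expect the identity to take the standard form
\begin{align*}
E_1^\ne(t)+\nu\int_0^t\normh{\nabla u_\ne}^2-\nu\int_0^t\int\nabla u_\ne:\nabla u^\eps\leq E_1^\ne(0)-\int_0^t\int\rho_\ne (w\cdot\nabla)u\cdot w+\int_0^t\int\Big(\frac{\rho_\ne}{\rho}-1\Big)\nabla p\cdot w+\mathcal{R}_\eps,
\end{align*}
where $w=u_\ne-u$. Here I used the Euler equation in the form $\partial_t u+(u\cdot\nabla)u=-\nabla p/\rho$. The remainder $\mathcal R_\eps$ collects every term containing $u^\eps-u$, $\nabla\eta^\eps$ or $1-\eta^\eps$. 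On the right, the convective term is bounded by $C\int_0^tE$ using $\nabla u\in L^\infty$ and $\rho_\ne\ge c_-$. The pressure term is bounded by $C\int_0^t(E_1+E_2)$ since $|\rho_\ne-\rho|\le C$ and $\rho\geq c_->0$, which comes from transport. For the viscous part, I would complete the square: $\nu\normh{\nabla u_\ne}^2-\nu\int\nabla u_\ne:\nabla u^\eps=\nu\normh{\nabla w}^2+\nu\int\nabla w:\nabla(u-u^\eps)+\nu\int\nabla w:\nabla u$. The middle term is at most $\frac\nu4\normh{\nabla w}^2+C\nu\eps$ by (xi). The last term is at most $\frac\nu4\normh{\nabla w}^2+C\nu^2$, which is exactly the source of the improved rate $\nu^2$. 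These two steps leave $\frac\nu2\normh{\nabla w}^2$ on the left and give the $\nu^2+\nu\eps$ terms.

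For $E_2^\ne$, I would differentiate $\int|\rho_\ne-\rho|^2$. Since $r=\rho_\ne-\rho$ satisfies $\partial_t r+u_\ne\cdot\nabla r=-w\cdot\nabla\rho$, one gets $\frac{d}{dt}\int r^2\le 2\normi{\nabla\rho}\normh{r}\normh{w}\le CE$. Because $\rho_\ne$ is only a weak (renormalized, via DiPerna--Lions with $u_\ne\in L^2(H^1)$) solution, this computation should be justified by renormalization or by expanding $\int\rho_\ne^2$ (conserved) $-2\int\rho_\ne\rho+\int\rho^2$ and testing the $\rho_\ne$ equation with the smooth $\rho$.

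The main obstacle is the remainder $\mathcal R_\eps$, namely the convective and time-derivative terms localized in the annulus $A_\eps$. A typical term is $\int\rho_\ne (u_\ne\cdot\nabla)u^\eps\cdot u_\ne$ minus its $\eta^\eps$-counterpart, which contains $\int_{A_\eps}\rho_\ne|u_\ne|^2|\nabla u^\eps|$. I would bound it by (i) and Lemma \ref{lemma_poin}: it is at most $\frac C\eps\norm{u_\ne}_{L^2(B_{(R+1)\eps})}^2\le C\eps\normh{\nabla u_\ne}^2$. I would then write $\nabla u_\ne=\nabla w+\nabla u$, so that $C\eps\normh{\nabla u_\ne}^2\le 2C\eps\normh{\nabla w}^2+C\eps$. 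This explains the coefficient $\frac\nu2-C\eps$ on the left and the $\eps$-linear terms on the right. The remaining annulus terms are handled directly: $\nabla\eta^\eps\times\partial_t\psi$ and $p\nabla\eta^\eps$ are bounded by (ix), $\psi\nabla^2\eta^\eps$ by (x), and $(1-\eta^\eps)$ factors by the measure $|A_\eps|\sim\eps^3$ together with Poincaré on $u_\ne$. Each of these contributes $C\eps^2$ or $C\eps\normh{\nabla u_\ne}^2$. Adding the $E_1$ and $E_2$ estimates gives \eqref{eq prop}.
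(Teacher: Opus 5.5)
Your route differs from the paper's in one structural way. You run the relative energy directly in $w=u_\ne-u$ and push all cutoff errors into $\mathcal R_\eps$. The paper instead works with $\delta u=u_\ne-u^\eps\in H^1_0(\Omega_\eps)$ and converts to $u_\ne-u$ only at the end, via items (vii) and (xi) of Lemma \ref{lemma_est}.

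That difference matters exactly at the step you single out as the source of the $\nu^2$ rate, and there your argument has a genuine gap. The bound $\nu\bigl|\int_0^t\int\nabla w:\nabla u\bigr|\le\frac\nu4\norm{\nabla w}_{L^2(L^2)}^2+C\nu^2$ does not follow from Young's inequality. Young, which your $\frac\nu4+\frac\nu4$ accounting suggests you intend, only gives $\frac\nu4\norm{\nabla w}^2_{L^2(L^2)}+\nu\norm{\nabla u}^2_{L^2(L^2)}$. That is an $O(\nu)$ error, i.e.\ only the $\sqrt\nu$ rate of \cite{MR2480743}. No bound of your shape can hold without an $\normh{w}^2$ term on the right.

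The missing idea is to integrate by parts so both derivatives fall on $u$: $\nu\int\nabla w:\nabla u=-\nu\int w\cdot\Delta u\le C\normh{w}^2+C\nu^2\normh{\Delta u}^2$. This goes into the Gr\"onwall term and uses none of $\nu\normh{\nabla w}^2$. In your setting it needs care, because $w=-u\neq0$ on $\partial\Omega_\eps$. You can either integrate over $\R^3$ with the zero extensions, or keep the boundary term $\nu\int_{\partial(\eps\Omega)}u\cdot\partial_n u\di S=O(\nu\eps^2)$. The first option is legitimate since $\nabla u_\ne$ and $\nabla u^\eps$ vanish in $\eps\Omega$, at the price of $\norm{u}^2_{L^2(\eps\Omega)}\le C\eps^3$.

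The paper avoids this issue because $\delta u$ has zero trace. In $I_5$ it integrates by parts only $\nu\int\eta^\eps\nabla u:\nabla\delta u$, and the cutoff contributes just $\nu\normi{\nabla u}\normh{\nabla\eta^\eps}=O(\nu\eps^{1/2})$ inside the square. Two further omissions are harmless: your completed square drops $\nu\int\nabla u:\nabla(u-u^\eps)=O(\nu\eps^2)$, and your pressure identity drops $\int_{\Omega_\eps}\nabla p\cdot w=\int_{\eps\Omega}\nabla p\cdot u=O(\eps^3)$.

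Your $\eps$-bookkeeping is also too crude to give \eqref{eq prop}. Writing $C\eps\normh{\nabla u_\ne}^2\le2C\eps\normh{\nabla w}^2+C\eps$ leaves a bare $C\eps$ on the right. But the only $\eps$-linear term in \eqref{eq prop} is $(\nu-C\eps)\eps\le\nu\eps$. A term $C\eps$ is not dominated by $C(\eps^2+\nu^2+\nu\eps)$ when $\nu^2\ll\eps\lesssim\nu$, and with $\eps\sim\nu$ it would again cap the rate at $\sqrt\nu$. You must keep the localization that Lemma \ref{lemma_poin} gives you: $\norm{\nabla u_\ne}_{L^2(B_{(R+1)\eps})}\le\normh{\nabla w}+\norm{\nabla u}_{L^2(B_{(R+1)\eps})}$ with $\norm{\nabla u}^2_{L^2(B_{(R+1)\eps})}\le C\eps^3$, so this contribution is only $C\eps^4$.

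Likewise, the correction $\int\rho_\ne u_\ne\cdot(u-u^\eps)$ at times $0$ and $t$ is only $O(\eps^{3/2})$ from the uniform $L^2$ bound on $u_\ne$, which is not admissible either. Split $u_\ne=w+u$ and use $\norm{u-u^\eps}_{L^1}\le C\eps^3$ to get $\frac14E_1^\ne+C\eps^3$.

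With these repairs your $w$-based argument does close. Its advantage is that the main terms show the Euler structure directly. The paper's choice of $\delta u$ buys Poincar\'e and integration by parts with no trace terms.
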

    \begin{remark}
        The main change to prove convergence rate $\nu$ instead of $\nu^{1/2}$ is to observe that some items in Lemma \ref{lemma_est} can be improved compared to $\R^2$, as was already noted in \cite{MR2480743}, and to estimate one integral more carefully (namely $I_5$ below). Yet, the methods do not seem to be adaptable to $\R^2$, since for example in Lemma \ref{lemma_est}, item \ref{eq:u eps - u L2}, we a priori only get a convergence rate of $\eps$ instead of $\eps^{3/2}$.
    \end{remark}
    \begin{proof}[Proof of Theorem \ref{thm: main result}]
    Using Proposition \ref{prop:energy estimate}, we choose $\eps$ small enough such that $C\eps< \nu/2$, so that we can drop the second term on the left-hand side. Then, using Grönwall's inequality,
        \begin{align}\label{eq:energy quadratic estimate}
            E^\ne(t)\leq C(E^\ne(0)+\nu^2).
        \end{align}
        Using this and choosing $\eps$ even smaller, so that $C\eps\leq \nu/4$, we can infer that
    \begin{align}
        \frac{\nu}{2} \norm{\nabla (u_\ne-u)}_{L^2(L^2)}^2\leq C\left(E^\ne(0)+\int_0^t E^\ne(s)\di {s}+\nu^2\right)\leq C\nu^2,
    \end{align}
    meaning that we also get an estimate for the gradient that vanishes as $\nu\rightarrow 0$:
        \begin{align}\label{eq:gradient estimate 1/2}
        \norm{\nabla (u_\ne-u)}_{L^2(L^2)}^2\leq C\nu.
    \end{align}
    Putting \eqref{eq:energy quadratic estimate} and \eqref{eq:gradient estimate 1/2} together, we get the desired quantitative estimate. 
        \end{proof}
        \begin{proof}[Proof of Proposition \ref{prop:energy estimate}]
We mostly follow the steps in \cite{MR2480743}, with some adjustments due to the presence of the density, and at some points can improve the estimates since we work in $\R^3$.\\
First, we note that from Lemma \ref{lemma_est}, items \ref{eq:u eps - u L2} and \ref{eq:u eps - u H^1}, for every $t\in [0,T]$ we have $\norm{(u^\eps-u)(t)}_{L^2}\leq C\eps^\frac{3}{2}$ and $\norm{\nabla (u^\eps-u)(t)}_{L^2}\leq C\eps^\frac{1}{2}$. So, in particular, by the triangle inequality,
\begin{equation}\label{eq:approx E by u_ne-u^eps}
\begin{aligned}
    &E_1^\ne(s)\leq \norm{(u^\eps-u_\ne)(s)}_{L^2}^2+C\eps^{3},\\
    &\norm{\nabla(u-u_\ne)(t)}_{L^2}^2\leq \norm{\nabla(u^\eps-u_\ne)(t)}_{L^2}^2+C\eps.
\end{aligned}
\end{equation}
Hence, it suffices to estimate the distance between $u_{\nu,\eps}$ and $u^\eps$ (and similarly for their gradients). 

We start to estimate the term $E_1^{\nu,\eps}(s)$ by expanding and using the energy inequality \eqref{eq:nergy ineq u ne}
\begin{equation}\label{eq:first expansion energy}
\begin{aligned}
    &\quad\;\frac{1}{2}\int_{\Omega_\eps}\left[\rho_{\nu,\eps}|u_{\nu,\eps}-u^\eps|^2\right](s) \di x\\
    &=\frac{1}{2}\int_{\Omega_{\eps}}\left[\rho_\ne|u_{\nu,\eps}|^2\right](s) \di x+\frac{1}{2}\int_{\Omega_{\eps}}\left[\rho_\ne|u^\eps|^2\right](s) \di x-\int_{\Omega_\eps}\left[\rho_{\nu,\eps}u_{\nu,\eps}u^\eps\right](s)\di x\\
    &\leq \frac{1}{2}\int_{\Omega_{\eps}}\left[\rho_\ne|u_{\nu,\eps}|^2\right](0)\di x-\nu\int_0^s\int_{\Omega_\eps} |\nabla u_{\nu,\eps}|^2\di x\di t\\
    &\quad+\frac{1}{2}\int_{\Omega_{\eps}}\left[\rho_\ne|u^\eps|^2\right](s) \di x-\int_{\Omega_\eps}\left[\rho_{\nu,\eps}u_{\nu,\eps}u^\eps\right] (s)\di x.
\end{aligned}
\end{equation}
Now we use the weak momentum equation for $u_{\ne}$ with $u^\eps$ as a test function:
\begin{align*}
&\int_0^s\int_{\Omega_\eps}\rho_{\nu,\eps}u_{\nu,\eps}\partial_tu^\eps+\rho_{\nu,\eps}u_{\nu,\eps}\otimes u_{\nu,\eps}:\nabla u^\eps-\nu\nabla u_{\nu,\eps}: \nabla u^\eps\di x\di t\\
    &=\int_{\Omega_\eps}\left[\rho_{\nu,\eps}u_{\nu,\eps} u^\eps\right](s) \di x-\int_{\Omega_\eps} \left[\rho_{\nu,\eps}u_{\nu,\eps}u^\eps\right](0)\di x.
\end{align*}
Inserting this equality in \eqref{eq:first expansion energy} and reordering the terms results in the estimate
\begin{equation}\label{eq:second expansion energy}
\begin{aligned}
     &\quad\;\frac{1}{2}\int_{\Omega_\eps}\left[\rho_{\nu,\eps}|u_{\nu,\eps}-u^\eps|^2\right](s) \di x\\
     &\leq \frac{1}{2}\int_{\Omega_{\eps}}\left[\rho_\ne|u_{\nu,\eps}|^2\right](0) \di x-\nu\int_0^s\int_{\Omega_\eps} |\nabla u_{\nu,\eps}|^2 \di x\di t+\frac{1}{2}\int_{\Omega_{\eps}}\left[\rho_\ne|u^\eps|^2\right](s) \di x\\
     &\quad-\int_0^s\int_{\Omega_\eps}\rho_{\nu,\eps}u_{\nu,\eps}\partial_t u^\eps+\rho_{\nu,\eps}u_{\nu,\eps}\otimes u_{\nu,\eps}:\nabla u^\eps-\nu\nabla u_{\nu,\eps}:\nabla u^\eps\di x\di t\\
     &\quad-\int_{\Omega_\eps} \left[\rho_{\nu,\eps}u_{\nu,\eps}u^\eps\right](0) \di x\\
     &=\int_{\Omega_\eps}\frac{1}{2}\left[\rho_\ne|u_{\nu,\eps}|^2\right](0)+\frac{1}{2}\left[\rho_\ne|u^\eps|^2\right](s)-\left[\rho_{\nu,\eps}u_{\nu,\eps}u^\eps\right](0) \di x\\
     &\quad -\int_0^s\int_{\Omega_\eps}(\rho_{\nu,\eps}u_{\nu,\eps}\partial_tu^\eps+\rho_{\nu,\eps}u_{\nu,\eps}\otimes u_{\nu,\eps}:\nabla u^\eps-\nu\nabla u_{\nu,\eps}:\nabla u^\eps+\nu |\nabla u_{\nu,\eps}|^2)\di x\di t.
     \end{aligned}
\end{equation}
Next, note by integrating by parts, we have the following identity for any divergence free vector field $u\in H^1(\R^3)$ (here we use the Einstein summation convention)
\begin{align*}
    &\int_{\R^3} \rho (u\cdot\nabla)u\cdot u \di x=\int_{\R^3} \rho u_i\partial_i u_j u_j \di x=\int_{\R^3} \frac{1}{2}\rho u_i\partial_i |u_j|^2 \di x\\
    &=-\frac{1}{2}\int_{\R^3} \partial_i \rho u_i |u_j|^2=-\frac{1}{2}\int_{\R^3} \nabla \rho\cdot u|u|^2 \di x.
\end{align*}
Now, we use the weak transport equation with the test function $|u^\eps(t)|^2:$
\begin{align*}
    &\int_{\Omega_\eps} \left[\rho_{\nu,\eps}|u^\eps|^2\right](s)-\rho_{0,\eps}|u^\eps(0)|^2\di x\\
    &=\int_0^s\int_{\Omega_\eps} \rho_{\nu,\eps}  \partial_t|u^\eps|^2\di x\di t+ \int_0^s\int_{\Omega_\eps} \rho_{\nu,\eps} u_{\nu,\eps}\cdot \nabla|u^\eps|^2\di x\di t
\end{align*}
so that, after computing the derivatives on the right-hand side,
\begin{align}
     &\quad\;\frac{1}{2}\int_{\Omega_\eps} \left[\rho_{\nu,\eps}|u^\eps|^2\right](s) \di x-\frac{1}{2}\int_{\Omega_\eps} \rho_{0,\eps}|u^\eps(0)|^2 \di x\nonumber\\
     &=\int_0^s\int_{\Omega_\eps} \rho_{\nu,\eps}u^\eps \partial_t u^\eps \di x\di t+\int_0^s\int_{\Omega_\eps} \rho_{\nu,\eps} (u_{\nu,\eps}\cdot\nabla)u^\eps\cdot u^\eps\di x\di t\label{eq:nonlinear term in weak transport}
\end{align}
Using this, we now work on the integral expression
\begin{align*}
     &\int_0^s\int_{\Omega_\eps}(\rho_{\nu,\eps}u_{\nu,\eps}\partial_tu^\eps+\rho_{\nu,\eps}u_{\nu,\eps}\otimes u_{\nu,\eps}:\nabla u^\eps-\nu\nabla u_{\nu,\eps}:\nabla u^\eps+\nu |\nabla u_{\nu,\eps}|^2)\di x\di t.
\end{align*}
We expand the second term and use the weak transport equation (we recognize one of the terms as the one in \eqref{eq:nonlinear term in weak transport}):
\begin{equation}\label{eq:expansion nonlinear term u ne}
\begin{aligned}
&\quad\;\int_0^s\int_{\Omega_\eps}\rho_{\nu,\eps}u_{\nu,\eps}\otimes u_{\nu,\eps}:\nabla u^\eps\di x\di t=\int_0^s\int_{\Omega_\eps}\rho_{\nu,\eps}(u_{\nu,\eps}\cdot\nabla)u^\eps \cdot u_{\nu,\eps} \di x\di t\\
    &=\int_0^s\int_{\Omega_\eps}\rho_{\nu,\eps}((u_{\nu,\eps}-u^\eps)\cdot\nabla)u^\eps\cdot(u_{\nu,\eps}-u^\eps)\di x\di t\\
    &\quad+\int_0^s\int_{\Omega_\eps}\rho_{\nu,\eps}(u^\eps\cdot \nabla)u^\eps\cdot(u_{\nu,\eps}-u^\eps)+\rho_{\nu,\eps}(u_{\nu,\eps}\cdot\nabla)u^\eps\cdot u^\eps\di x\di t\\
    &=\int_0^s\int_{\Omega_\eps}\rho_{\nu,\eps}((u_{\nu,\eps}-u^\eps)\cdot\nabla)u^\eps\cdot(u_{\nu,\eps}-u^\eps)+\rho_{\nu,\eps}(u^\eps\cdot \nabla)u^\eps\cdot(u_{\nu,\eps}-u^\eps)\di x\di t\\
    &\quad-\int_0^s\int_{\Omega_\eps} \rho_{\nu,\eps}u^\eps \partial_t u^\eps \di x\di t + \frac{1}{2}\int_{\Omega_\eps} \left[\rho_{\nu,\eps}|u^\eps|^2\right](s)\di x-\frac{1}{2}\int_{\Omega_\eps} \rho_0|u^\eps(0)|^2 \di x.
    \end{aligned}
\end{equation}
Furthermore we note that
\begin{align}\label{eq:expanding product delta rho delta u}
    (\rho_{\nu,\eps} -\rho)(u_{\nu,\eps}-u^\eps)\partial_t u^\eps=\rho_{\nu,\eps} u_{\nu,\eps}\partial_t u^\eps-\rho_{\nu,\eps} u^\eps\partial_t u^\eps-\rho(u^\eps-u_{\nu,\eps})\partial_t u^\eps,
\end{align}
and that
\begin{align}\label{eq:completing square diffusion terms}
    -\nabla u_{\nu,\eps}:\nabla u^\eps+ |\nabla u_{\nu,\eps}|^2=|\nabla (u_\ne-u^\eps)|^2-\nabla u^\eps:\nabla(u_\ne-u^\eps).
\end{align}
Now, we can put everything together in \eqref{eq:second expansion energy} by using, in that order, \eqref{eq:expansion nonlinear term u ne},\eqref{eq:expanding product delta rho delta u},\eqref{eq:completing square diffusion terms}:
\begin{align*}
     &\quad\;\frac{1}{2}\int_{\Omega_\eps}\left[\rho_{\nu,\eps}|u_{\nu,\eps}-u^\eps|^2\right](s) \di x\\
     &\leq \int_{\Omega_\eps}\frac{1}{2}\left[\rho_\ne|u_{\nu,\eps}|^2\right](0)+\frac{1}{2}\left[\rho_\ne|u^\eps|^2\right](s)-\left[\rho_{\nu,\eps}u_{\nu,\eps}u^\eps\right](0) \di x\\
     &\quad -\int_0^s\int_{\Omega_\eps}(\rho_{\nu,\eps}u_{\nu,\eps}\partial_tu^\eps+\rho_{\nu,\eps}u_{\nu,\eps}\otimes u_{\nu,\eps}:\nabla u^\eps-\nu\nabla u_{\nu,\eps}:\nabla u^\eps+\nu |\nabla u_{\nu,\eps}|^2)\di x\di t\\
     &= \int_{\Omega_\eps}\frac{1}{2}\left[\rho_\ne|u_{\nu,\eps}|^2\right](0)+\frac{1}{2}\left[\rho_\ne|u^\eps|^2\right](s)-\left[\rho_{\nu,\eps}u_{\nu,\eps}u^\eps\right](0) \di x\\
     &\quad-\frac{1}{2}\int_{\Omega_\eps} \left[\rho_{\nu,\eps}|u^\eps|^2\right](s) \di x+\frac{1}{2}\int_{\Omega_\eps} \rho_0|u^\eps(0)|^2 \di x\\
     &\quad-\int_0^s\int_{\Omega_\eps}(\rho_{\nu,\eps}u_{\nu,\eps}\partial_tu^\eps-\rho_{\nu,\eps}u^\eps\partial_t u^\eps+\rho_{\nu,\eps}((u_{\nu,\eps}-u^\eps)\cdot\nabla)u^\eps\cdot(u_{\nu,\eps}-u^\eps)\di x\di t\\
     &\quad-\int_0^s\int_{\Omega_\eps}\rho_{\nu,\eps}(u^\eps\cdot \nabla)u^\eps\cdot (u_{\nu,\eps}-u^\eps) \di x\di t\\
     &\quad+\int_0^s\int_{\Omega_\eps}\nu\nabla (u_{\nu,\eps}-u^\eps):\nabla u^\eps-\nu |\nabla u_{\nu,\eps}|^2\di x\di t\\
     &= \int_{\Omega_\eps}\frac{1}{2}\left[\rho_\ne|u_{\nu,\eps}|^2\right](0)+\frac{1}{2}\left[\rho_\ne|u^\eps|^2\right](s)-\left[\rho_{\nu,\eps}u_{\nu,\eps}\cdot u^\eps\right](0) \di x\\
     &\quad-\frac{1}{2}\int_{\Omega_\eps} \left[\rho_{\nu,\eps}|u^\eps|^2\right](s)\di x+\frac{1}{2}\int_{\Omega_\eps} \rho_0|u^\eps(0)|^2\di x\\
     &\quad-\int_0^s\int_{\Omega_\eps} (\rho_{\nu,\eps} -\rho)(u_{\nu,\eps}-u^\eps)\partial_t u^\eps-\rho(u^\eps-u_{\nu,\eps})\partial_t u^\eps\di x\di t\\
     &\quad-\int_0^s\int_{\Omega_\eps}\rho_{\nu,\eps}((u_{\nu,\eps}-u^\eps)\cdot\nabla)u^\eps(u_{\nu,\eps}-u^\eps)+\rho_{\nu,\eps}(u^\eps\cdot \nabla)u^\eps(u_{\nu,\eps}-u^\eps)\di x\di t\\
     &\quad+\int_0^s\int_{\Omega_\eps}-\nu\nabla (u_{\nu,\eps}-u^\eps):\nabla u^\eps-\nu |\nabla (u_{\nu,\eps}-u^\eps)|^2)\di x\di t\\
\end{align*}
Now we can group together the first two lines of the last equation to get
\begin{align*}
     &\quad\;\frac{1}{2}\int_{\Omega_\eps}\left[\rho_{\nu,\eps}|u_{\nu,\eps}-u^\eps|^2\right](s) \di x\\
     &\leq \frac{1}{2}\int_{\Omega_\eps}\left[\rho_{\nu,\eps}|u_{\nu,\eps}-u^\eps|^2\right](0)-\left[(\rho_{\nu,\eps}-\rho)|u^\eps|^2\right](0)\di x \\
     &\quad-\int_0^s\int_{\Omega_\eps} (\rho_{\nu,\eps} -\rho)(u_{\nu,\eps}-u^\eps)\partial_t u^\eps-\rho(u^\eps-u_{\nu,\eps})\partial_t u^\eps\di x\di t\\
     &\quad-\int_0^s\int_{\Omega_\eps}\rho_{\nu,\eps}((u_{\nu,\eps}-u^\eps)\cdot\nabla)u^\eps(u_{\nu,\eps}-u^\eps)+\rho_{\nu,\eps}(u^\eps\cdot \nabla)u^\eps(u_{\nu,\eps}-u^\eps)\di x\di t\\
     &\quad+\int_0^s\int_{\Omega_\eps}-\nu\nabla (u_{\nu,\eps}-u^\eps):\nabla u^\eps-\nu |\nabla (u_{\nu,\eps}-u^\eps)|^2)\di x\di t\\
\end{align*}
and we use the equation for $u^\eps$ (see \eqref{eq: del t u eps}) and $u$ to expand the third last line.
Now we define $\delta u:=u_{\nu,\eps}-u^\eps$ and $\delta\rho:=\rho_{\nu,\eps}-\rho$. In the second step we then expand the second last line
\begin{align*}
     &\quad\;\frac{1}{2}\int_{\Omega_\eps}\left[\rho_{\nu,\eps}|u_{\nu,\eps}-u^\eps|^2\right](s)\di x+\nu\int_0^s\int_{\Omega_\eps}|\nabla (u_{\nu,\eps}-u^\eps)|^2\di x\di t\\
     &\leq \int_{\Omega_\eps}\frac{1}{2}\left[\rho_{\nu,\eps}|\delta u|^2\right](0)-\left[(\delta\rho)|u^\eps|^2\right](0)\di x\\
     &\quad+\int_0^s\int_{\Omega_\eps}- (\delta\rho)(\delta u)\partial_t u^\eps-(\delta u)\cdot(\rho\nabla\eta^\eps\times \partial_t\psi-\eta^\eps\nabla p)\di x\di t\\
     &\quad+\int_0^s\int_{\Omega_\eps}-\rho_{\nu,\eps}((\delta u)\cdot\nabla)u^\eps\cdot(\delta u)\di x\di t\\
     &\quad+\int_0^s\int_{\Omega_\eps}-\rho_{\nu,\eps}(u^\eps\cdot \nabla)\cdot 
     u^\eps(\delta u)+\eta^\eps \rho(u\cdot \nabla)u\cdot(\delta u))\di x\di t\\
    &\quad+\int_0^s\int_{\Omega_\eps}-\nu\nabla(u_{\nu,\eps}-u^\eps):\nabla u^\eps\di x\di t\\
     &\leq \int_{\Omega_\eps}\frac{1}{2}\left[\rho_{\nu,\eps}|\delta u|^2\right](0)-\left[(\delta\rho)|u^\eps|^2\right](0)\di x\\
     &\quad+\underbrace{\int_0^s\int_{\Omega_\eps}- (\delta\rho)(\delta u)\partial_t u^\eps-(\delta u)\cdot(\rho\nabla\eta^\eps\times \partial_t\psi-\eta^\eps\nabla p)\di x\di t}_{I_1}\\
     &\quad+\underbrace{\int_0^s\int_{\Omega_\eps}-\rho_{\nu,\eps}((\delta u)\cdot\nabla)u^\eps\cdot(\delta u)\di x\di t}_{I_2}\\
     &\quad+ \underbrace{\int_0^s\int_{\Omega_\eps} -\rho ((u^\eps-\eta^\eps u)\cdot\nabla)u\cdot(\delta u)-(\delta\rho)(u^\eps\cdot\nabla)u\cdot(\delta u)\di x\di t}_{I_3}\\
     &\quad+\underbrace{\int_0^s\int_{\Omega_\eps}-\rho_\ne(u^\eps\cdot\nabla)(u^\eps-u)\cdot(\delta u)\di x\di t}_{I_4}\\
     &\quad+\underbrace{\int_0^s\int_{\Omega_\eps}-\nu\nabla (u_{\nu,\eps}-u^\eps):\nabla u^\eps\di x\di t}_{I_5}\\
\end{align*}
Now we estimate all the integrals:

For $I_1$ we first define
\begin{equation*}
    I_1^1=\int_0^s\int_{\Omega_\eps}- (\delta\rho)(\delta u)\partial_t u^\eps\di x\di t
\end{equation*}
and 
\begin{equation*}
    I_1^2=\int_0^s\int_{\Omega_\eps}-(\delta u)\cdot(\rho\nabla\eta^\eps\times \partial_t\psi-\eta^\eps\nabla p)\di x\di t.
\end{equation*}
By using Lemma \ref{lemma_est}, item \ref{eq:del t u eps L infty}, we get:
\begin{align*}
    I_1^1&\leq \int_0^s C \norm{\delta \rho}_{L^2}\norm{\delta u}_{L^2}\di t\\
    &\leq \int_0^s C\norm{\delta \rho}_{L^2}^2+C\norm{\delta u}_{L^2}^2\di t.
\end{align*}
For the second term we use integration by parts along with the fact that $\delta u$ is divergence free as a sum of divergence-free functions and combine this with Lemma \ref{lemma_est}, item \ref{eq:grad eta with p or del t psi}:
\begin{align*}
    I_1^2&=\int_0^s\int_{\Omega_\eps} -(\delta u)\cdot(\rho\nabla\eta^\eps\times \partial_t\psi-\eta^\eps\nabla p)\di x\di t\\
    &=\int_0^s\int_{\Omega_\eps} -(\delta u)\cdot\rho\nabla\eta^\eps\times \partial_t\psi-(\delta u)\nabla\eta^\eps p\di x\di t\\
    &\leq \int_0^s \normh{\delta u}(\normh{\rho\nabla\eta^\eps\times\partial_t\psi}+\normh{\nabla\eta^\eps p})\di t\\\
    &\leq \int_0^s C\normh{\delta u}^2+C(\normi{\rho}\normh{\nabla\eta^\eps\times\partial_t\psi}+\normh{\nabla\eta^\eps p})^2\di t\\\
    &\leq\int_0^s C \normh{\delta u}^2+C\eps^3\di t.
\end{align*}
For $I_2$ we use the equation for $\nabla u^\eps$ (see \eqref{eq:nabla u eps computed}) along with Lemma \ref{lemma_est}, item \ref{eq:grad eta grad psi or psi grad2 eta} and Lemma \ref{lemma_poin}:
\begin{align*}
    I_2&=\int_0^s\int_{\Omega_\eps} -\rho_{\nu,\eps}((\delta u)\cdot\nabla)u^\eps\cdot(\delta u)\di x \di t\\
    &\leq\int_0^s\int_{\Omega_\eps}\rho_\ne|\nabla u^\eps||\delta u|^2\di x\di t\\
    &\leq\sum_{i=1}^3\int_0^s\int_{\Omega_\eps}\rho_\ne(|u\partial_i \eta^\eps|+|\nabla\eta^\eps\times\partial_i\psi|+|\partial_i\nabla\eta^\eps\times\psi|)|\delta u|^2\di x\di t\\
    &\phantom{\leq}+\sum_{i=1}^3\int_0^s\int_{\Omega_\eps}\rho_\ne|\eta^\eps\partial_i u||\delta u|^2\di x\di t\\
    &\leq \int_0^s C\normi{\rho_\ne}\frac{1}{\eps}\norm{\delta u}_{L^2(B_{(R+1)\eps})}^2\di t+\sum_{i=1}^3\int_0^s\normi{\eta^\eps\partial_i u}\normh{\sqrt{\rho_\ne}\delta u}^2\di t\\
    &\leq \int_0^s C\eps\norm{\nabla \delta u}_{L^2(B_{(R+1)\eps})}^2\di t+\int_0^s CE_1^\ne\di t.
\end{align*}
In the second last line we used that $\nabla\eta^\eps$ is supported in $A_\eps$ to restricted the domain of integration to $A_\eps$ and the estimate \ref{eq:grad u eps L infty} from Lemma \ref{lemma_est}. In the last line we used the Poincaré inequality for punctured domain (Lemma \ref{lemma_poin}) in the first integral.

For $I_3$ we look at the two terms separately: for the first term, we  use Lemma \ref{lemma_est}, item \ref{eq:u eps - eta u L2}:
\begin{align*}
    I_3^1&=\int_0^s\int_{\Omega_\eps} -\rho ((u^\eps-\eta^\eps u)\cdot\nabla)u\cdot(\delta u)\di x\di t\\
    &\leq \int_0^s \normi{\rho}\normh{u^\eps-\eta^\eps u}\normi{\nabla u}\normh{\delta u} \di  t\\
    &\leq \int_0^s C\normh{u^\eps-\eta^\eps u}^2+C\normh{\delta u}^2\di t\\
    &\leq \int_0^s C \normh{\delta u}^2\di t+C\eps^3.
\end{align*}
And for the second term, we use Lemma \ref{lemma_est}, item \ref{eq:u eps L infty}:
\begin{align*}
    I_3^2&=-\int_0^s\int_{\Omega_\eps}(\delta\rho)(u^\eps\cdot\nabla)u\cdot(\delta u)\di x\di t\\
    &\leq \int_0^s \normh{\delta \rho}\normi{u^\eps}\normi{\nabla u}\normh{\delta u} \di t\\
    & \leq \int_0^s C\normh{\delta \rho}^2\di t+\int_0^s C\normh{\delta u}^2\di t.
\end{align*}
For $I_4$ we use Lemma \ref{lemma_est}, item \ref{eq:u eps L infty} and \ref{eq:u eps - u H^1}, along with Lemma \ref{lemma_poin}:
\begin{align*}
    I_4&=-\int_0^s\int_{\Omega_\eps}\rho_\ne(u^\eps\cdot\nabla)(u^\eps-u)\cdot(\delta u)\di t\\
    &\leq \int_0^s \normi{\rho_\ne}\normi{u^\eps}\normh{\nabla u^\eps-\nabla u}\norm{\delta u}_{L^2(B_{(R+1)\eps})}\di t\\
     &\leq \int_0^s C\norm{\nabla u^\eps-\nabla u}_{L^2(B_{(R+1)\eps})}\eps\norm{\nabla\delta u}_{L^2(B_{(R+1)\eps})}\di t\\
     &\leq \int_0^s C\eps^\frac{3}{2}\norm{\nabla\delta u}_{L^2(B_{(R+1)\eps})}\di t\\
    &\leq C\eps \int_0^s\norm{\nabla\delta u}_{L^2(B_{(R+1)\eps})}^2\di t+C\eps^2 \di t.
\end{align*}
Next, for $I_5$ we use the equation for $\nabla u^\eps$:
\begin{equation*}
     \partial_i u^\eps=u\partial_i \eta^\eps+\nabla\eta^\eps\times\partial_i\psi+\partial_i\nabla\eta^\eps\times\psi+\eta^\eps\partial_i u
\end{equation*}
We already noted previously that the first three terms have support in $A_\eps$ (see Lemma \ref{lemma_est}, item \ref{eq:grad u eps L2 first three}), hence we have that 
\begin{align*}
    \nu\int_0^s\int_{\Omega_\eps}(u\partial_i \eta^\eps+\nabla\eta^\eps\times\partial_i\psi+\partial_i\nabla\eta^\eps\times\psi):\partial_i\delta u\di x\di t&\leq \frac{1}{2}\nu \int_0^s\normh{\partial_i\delta u}^2\di t+C\nu\eps.
\end{align*}
Now we look at the remaining fourth term involving $\eta^\eps\nabla u$ and integrate by parts, using the fact that $\delta u=0$ on $\partial\Omega_\eps$. Then we use the assumed regularity of $u$:
\begin{align*}
    \left|\nu\int_0^s\int_{\Omega_\eps}\eta^\eps \nabla u:\nabla\delta u\di x\di t\right|&=\left|-\nu\int_0^s\int_{\Omega_\eps}\eta^\eps \Delta u\cdot\delta u+(\delta u\otimes \nabla \eta^\eps):\nabla u\di x\di t\right|\\
    &\leq\nu\int_0^s\int_{\Omega_\eps}|(\eta^\eps \Delta u_i+\partial_j\eta^\eps \partial_j u_i)\delta u_i|\di x\di t\\
    &\leq\nu\int_0^s \normh{\delta u}(\normh{\eta^\eps\Delta u}+\normh{\sqrt{\sum_{i=1}^3|\partial_j u_i\partial_j\eta^\eps|^2} })\di t\\
    &\leq C\int_0^s \normh{\delta u}^2\di t\\
    &\phantom{\leq}+C\nu^2\int_0^s(\normh{\eta^\eps\Delta u}+\norm{\nabla u}_{L^\infty}\normh{\nabla\eta^\eps})^2\di t\\
    &\leq C\int_0^s \normh{\delta u}^2\di t+C\nu^2(1+\eps^\frac{1}{2})^2\\
    &\leq C\int_0^s \normh{\delta u}^2\di t+C\nu^2.
\end{align*}

Putting these individual estimates together, we deduce that
\begin{align*}
    I_5\leq \frac{1}{2}\nu\int_0^s\normh{\nabla\delta u}^2\di t+C\int_0^s \normh{\delta u}^2\di t+C\nu^2+C\nu\eps.
\end{align*}
To summarise the estimate of $E_1^\ne$ the key steps were to get sharp enough estimates (crucially relying on the measure of $A_\eps$ and Lemma \ref{lemma_poin}) to control the $L^2$-norm of the terms involving $\delta u$ and $\nabla\delta u$. This then leads to the term $(\nu/2-C\eps)\norm{\nabla (u_\ne-u)}_{L^2(L^2)}^2$ in the statement of Proposition \ref{prop:energy estimate}, which allows us in Theorem $\ref{thm: main result}$ to analyse the rate of convergence as $\nu\to 0$.

As a next step, we estimate $$E_2^\ne(s)=\frac{1}{2}\int_{\Omega_\eps} |\rho_\ne-\rho|^2(s)\di x.$$
We have
\begin{align*}
    E_2^\ne(s)&=\frac{1}{2}\int_{\Omega_\eps} |\rho_\ne-\rho|^2(s)\di x\\
    &=\frac{1}{2}\int_{\Omega_\eps} \rho_\ne(s)^2\di x+\frac{1}{2}\int_{\Omega_\eps} \rho(s)^2\di x-\int_{\Omega_\eps} \rho_\ne(s)\rho (s)\di x
\end{align*}
For the last term, we use $\rho$ as a test function for the weak transport equation:
\begin{align*}
    \int_{\Omega_\eps} \rho_\ne(s) \rho(s) \di x-\int_{\Omega_\eps} \rho_\ne(0)\rho(0)\di x=\int_0^s\int_{\Omega_\eps} \rho_\ne \partial_t \rho +\rho_\ne u_\ne \nabla \rho \di x\di t.
\end{align*}
Moreover, we observe that since $u$ and $u_\ne$ are divergence-free, we have that
    \begin{align*}
        \int_0^t\int_{\Omega_\eps}\rho (u-u_\ne)\cdot\nabla\rho\di x\di s=\frac{1}{2}\int_0^t\int_{\Omega_\eps}(u-u_\ne)\cdot\nabla|\rho|^2\di x\di s=0.
    \end{align*}
We use these identities along with the fact that the $L^2$-norm of $\rho$ and $\rho_\ne$ is conserved. Note that since we are working with weak solutions $\rho_\ne$ to the transport equation the conservation of $L^p$-norms is not immediate. Following the theory of renormalized solutions to the transport equation developed by DiPerna and Lions \cite{diperna_lions} one can show by an approximation argument that in our setting the $L^2$-norm of $\rho_\ne$ is conserved.
   Furthermore we use Lemma \ref{lemma_est}, item \ref{eq:u eps - u L2} to get
\begin{align*}
    E_2^\ne(s)&= \frac{1}{2}\normh{\rho_\ne(s)}^2+\frac{1}{2}\normh{\rho(s)}^2-\int_{\Omega_\eps}\rho_\ne(0)\rho(0)\di x\\
    &\phantom{=}-\int_0^s\int_{\Omega_\eps}\rho_\ne \partial_t \rho +\rho_\ne u_\ne \nabla \rho \di x\di t\\
    &=\frac{1}{2}\normh{\rho_\ne(0)}^2+\frac{1}{2}\normh{\rho(0)}^2-\int_{\Omega_\eps}\rho_\ne(0)\rho(0)\di x\\
    &\phantom{=}-\int_0^s\int_{\Omega_\eps}\rho_\ne (u_\ne-u) \nabla \rho \di x\di t\\
    &=\frac{1}{2}\int_{\Omega_\eps}|\rho_\ne(0)-\rho(0)|^2 \di x+\int_0^s\int_{\Omega_\eps}(\rho_\ne-\rho) (u-u_\ne) \nabla \rho \di x\di t\\
    &=\frac{1}{2}\int_{\Omega_\eps}|\rho_\ne(0)-\rho(0)|^2 \di x+\int_0^s\int_{\Omega_\eps}(\rho_\ne-\rho) (u-u^\eps) \nabla \rho \di x\di t\\
    &\phantom{=}+\int_0^s\int_{\Omega_\eps}(\rho_\ne-\rho) (u^\eps-u_\ne) \nabla \rho \di x\di t\\
    &\leq E_2^\ne(0)+\int_0^s C\normh{\rho_\ne-\rho}^2 \di t\\
    &\phantom{\leq}+\int_0^s C\normh{u_\ne-u^\eps}^2\di t+\int_0^s C\normh{u^\eps-u}^2\di t\\
     &\leq E_2^\ne(0)+\int_0^s C\normh{\rho_\ne-\rho}^2 \di t+\int_0^s C\normh{u_\ne-u^\eps}^2\di t+ C\eps^3,
\end{align*}

Putting all the estimates together we get
\begin{align*}
    &\quad\;\norm{(\delta u)(s)}_{L^2}^2+E_2^\ne(s)+\left(\frac{\nu}{2}-C\eps\right)\int_0^s\normh{\nabla \delta u}^2\di t\\
    &\leq\norm{(\delta u)(0)}_{L^2}^2+E_2^\ne(0)+C\int_0^s \norm{(\delta u)(t)}_{L^2}^2+E_2^\ne(t)\di t\\
    & \quad+C\eps^3+C\eps^2+C\nu^2+C\nu\eps\\
    &\leq  \norm{(\delta u)(0)}_{L^2}^2+E_2^\ne(0)+C\int_0^s \norm{(\delta u)(t)}_{L^2}^2+E_2^\ne(t) \di t+C(\eps^2+\nu^2),
\end{align*}
which then, as we noted at the beginning (see \eqref{eq:approx E by u_ne-u^eps}), implies the estimate as claimed in Proposition \ref{prop:energy estimate}.
\end{proof}
\section*{Acknowledgments}
The authors would like to thank their advisors Richard M. Höfer and Emil Wiedemann for their invaluable feedback and for suggesting this topic.\\
The first author was funded by the Deutsche Forschungsgemeinschaft (DFG, German
Research Foundation) Research Training Group 2339 “Interfaces, Complex Structures, and
Singular Limits in Continuum Mechanics – Analysis and Numerics”. The support is gratefully acknowledged.
\section*{References}
\printbibliography[heading=none]
\end{document}